\newcommand{\bbC}{{\mathbb{C}}}
\newcommand{\bbD}{{\mathbb{D}}}
\newcommand{\bbG}{{\mathbb{G}}}
\newcommand{\bbR}{{\mathbb{R}}}
\newcommand{\bbZ}{{\mathbb{Z}}}
\newcommand{\calF}{{\mathcal F}}
\newcommand{\calH}{{\mathcal H}}
\newcommand{\calI}{{\mathcal I}}
\newcommand{\calK}{{\mathcal K}}
\newcommand{\calL}{{\mathcal L}}
\newcommand{\calS}{{\mathcal S}}
\newcommand{\bddot}{{\boldsymbol{\cdot}}}
\newcommand{\x}{{\mathbf{x}}}
\newcommand{\z}{{\mathbf{z}}}
\newcommand{\J}{{\mathscr{J}}}
\newcommand{\dott}{\,\cdot\,}
\newcommand{\lb}{\label}
\newcommand{\f}{\frac}
\newcommand{\ol}{\overline}
\newcommand{\ti}{\tilde  }
\newcommand{\wti}{\widetilde  }
\newcommand{\tr}{\text{\rm{Tr}}}
\newcommand{\dist}{\text{\rm{dist}}}
\newcommand{\idx}{\text{\rm{index}}}
\newcommand{\spec}{\text{\rm{spec}}}
\newcommand{\ran}{\text{\rm{Ran}}}
\newcommand{\e}{\text{\rm{e}}}
\newcommand{\di}{\text{\rm{d}}}
\newcommand{\s}{\text{\rm{s}}}
\newcommand{\bi}{\bibitem}
\newcommand{\beq}{\begin{equation}}
\newcommand{\eeq}{\end{equation}}
\newcommand{\ba}{\begin{align}}
\newcommand{\ea}{\end{align}}
\newcommand{\veps}{\varepsilon}
\newcommand{\fre}{\frak{e}}
\newcommand{\abs}[1]{\lvert#1\rvert}
\newcommand{\jap}[1]{\langle #1 \rangle}
\newcommand{\norm}[1]{\lVert#1\rVert}
\newcommand{\vy}{{\vec{y\!}}\,}
\newcounter{smalllist}
\newenvironment{SL}{\begin{list}{{\rm\roman{smalllist})}}{%
\setlength{\topsep}{0mm}\setlength{\parsep}{0mm}\setlength{\itemsep}{0mm}%
\setlength{\labelwidth}{2em}\setlength{\leftmargin}{2em}\usecounter{smalllist}%
}}{\end{list}}
\DeclareMathOperator{\Ima}{Im}
\numberwithin{equation}{section}
\newtheorem{theorem}{Theorem}[section]
\newtheorem{proposition}[theorem]{Proposition}
\newtheorem{lemma}[theorem]{Lemma}
\newtheorem{corollary}[theorem]{Corollary}
\theoremstyle{definition}
\theoremstyle{remark}
\newtheorem*{remark}{Remark}
\newtheorem*{remarks}{Remarks}
\newtheorem*{notes}{Notes}
\begin{document}

\title[Critical Lieb--Thirring Bounds in Gaps]
{Critical Lieb--Thirring Bounds in Gaps and the Generalized Nevai Conjecture for Finite Gap Jacobi Matrices}
\author[R.~Frank and B.~Simon]{Rupert L.\ Frank$^1$ and Barry Simon$^2$}

\thanks{$^1$ Department of Mathematics, Princeton University, Princeton, NJ 08544, USA.
E-mail: rlfrank@math.princeton.edu}
\thanks{$^2$ Mathematics 253-37, California Institute of Technology, Pasadena, CA 91125, USA.
E-mail: bsimon@caltech.edu. Supported in part by NSF grant DMS-0652919}

\date{July 4, 2010}
\keywords{Lieb--Thirring bounds, periodic Schr\"odinger operators, Birman--Schwinger bound, finite gap Jacobi matrix}
\subjclass[2010]{35P15, 35J10, 47B36}

\begin{abstract} \ We prove bounds of the form 
\[\sum_{e\in I\cap\sigma_\di (H)} \dist (e,\sigma_\e (H))^{1/2}\leq L^1\text{-norm of a perturbation} 
\]
where $I$ is a gap. Included are gaps in continuum one-dimensional periodic
Schr\"odinger operators and finite gap Jacobi matrices where we get a generalized Nevai conjecture about an $L^1$
condition implying a Szeg\H{o} condition. One key is a general new form of the Birman--Schwinger bound in gaps.
\end{abstract}

\maketitle

\section{Introduction} \lb{s1}

This paper discusses spectral theory of Schr\"odinger operators, $-\Delta+V$ on $L^2(\bbR^\nu)$, and
Jacobi matrices
\begin{equation} \lb{1.1}
J=
\begin{pmatrix}
b_1 & a_1 & 0  & \cdots \\
a_1 & b_2 & a_2  & \cdots \\
0 & a_2 & b_3  & \cdots \\
\vdots & \vdots & \vdots  & \ddots
\end{pmatrix}
\end{equation}
on $\ell^2(\bbZ_+)$.

One of the streams motivating our work here are critical Lieb--Thirring inequalities. For any selfadjoint operator, $A$,
define
\begin{equation} \lb{1.2}
S^\gamma(A) = \sum_{e\in\sigma_\di(A)} \dist(e,\sigma_\e(A))^\gamma
\end{equation}
where $\sigma_\di$ is the discrete spectrum and $\sigma_\e$ the essential spectrum, and the sum counts any $e$ the number of
times of its multiplicity. Then, the original Lieb--Thirring bounds \cite{LT76} assert that (here $V_- =\max(0,-V)$)
\begin{equation} \lb{1.3}
S^\gamma (-\Delta+V) \leq L_{\gamma,\nu} \int V_-(x)^{\gamma+ \nu/2}\, d^\nu x
\end{equation}
for a universal constant, $L_{\gamma,\nu}$. In \cite{LT76}, Lieb and Thirring proved this for $\gamma >\f12$ if $\nu=1$
and for $\gamma >0$ if $\nu\geq 2$. The endpoint result for $\gamma=0$ if $\nu\geq 3$ is the celebrated CLR bound
(see \cite{Hun,LW} for reviews and history of Lieb--Thirring and related bounds). For $\nu=1$, the endpoint result (called
the critical bound) for $\gamma=\f12$ is due to Weidl \cite{Weidl}, with an alternate proof and optimal constant due to
Hundertmark, Lieb, and Thomas \cite{HLT}.

Here we will be interested in analogs of the critical bound in one dimension for perturbations of operators other than
$-\Delta$. For perturbations of the free Jacobi matrix ($J$ with $b_n\equiv 0$, $a_n\equiv 1$), the critical bound
is due to Hundertmark--Simon \cite{HS}, and for perturbations of periodic Jacobi matrices to Damanik, Killip, and Simon
\cite{DKS-Ann}. In \cite{FSW}, Frank, Simon, and Weidl proved bounds of the form
\begin{equation} \lb{1.5}
\sum_{\substack{e<\inf \sigma(H_0) \\ e\in\sigma(H)}} \dist(e,\sigma(H_0))^{1/2} \leq c\int \abs{V(x)}\, dx
\end{equation}
for $H_0 = -\f{d^2}{dx^2} + V_0$ and the Jacobi analog for $e<\inf \sigma (J_0)$ and $e>\sup \sigma(J_0)$, where
$H_0$ has a ``regular ground state'' and, in particular, in the case of periodic $V_0$.

Typical of our new results is:

\begin{theorem}\lb{T1.1} Let $V_0$ be a periodic, locally $L^1$ function on $\bbR$. Let $(a,b)$ be a gap in the
spectrum of $H_0=-\f{d^2}{dx^2}+V_0$. Then there is a constant $c$ so that for any $V\in L^1(\bbR)$, one has
\begin{equation} \lb{1.4}
\sum_{\substack{e\in\sigma_\di (H_0+V) \\ e\in (a,b)}} \dist(e,\sigma(H_0))^{1/2} \leq c \int \abs{V(x)}\, dx
\end{equation}
\end{theorem}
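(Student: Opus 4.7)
The plan is to prove Theorem~\ref{T1.1} by a Birman--Schwinger argument in the gap $(a,b)$, realizing the general principle promised in the abstract. An energy $e \in (a,b)$ lies in $\sigma_\di(H_0+V)$ with multiplicity $m$ iff $-1$ is an eigenvalue of multiplicity $m$ of the Birman--Schwinger operator
\[
K(e) = |V|^{1/2}\, (H_0-e)^{-1}\, \text{sgn}(V)|V|^{1/2}.
\]
Since $\dist(e,\sigma(H_0)) = \min(e-a,\,b-e)$, one splits the gap at its midpoint and estimates the sum on each half separately; by symmetry, it suffices to bound the contribution from $e \in [(a+b)/2, b)$ with weight $(b-e)^{1/2}$.

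Next, I would analyze the Green's function of $H_0$ at energies in the gap via Floquet theory. For $e \in (a,b)$, $G_e$ admits the representation $G_e(x,y) = W(e)^{-1}\psi_+(x_>;e)\psi_-(x_<;e)$, where $\psi_\pm(\cdot\,;e)$ are the Weyl solutions decaying exponentially at $\pm\infty$ with rate $\kappa(e)$. As $e \to b^-$, $\kappa(e)$ vanishes like $(b-e)^{1/2}$ and both Weyl solutions converge to the bounded quasiperiodic edge Bloch function. Consequently, $K(e)$ decomposes, up to an $L^\infty$-bounded remainder, into a rank-one--type piece whose norm grows like $(b-e)^{-1/2}$.

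The third step applies the critical trace-ideal technique of Hundertmark--Lieb--Thomas, as extended to regular ground states in \cite{FSW}. A sum-rule--type inequality converts $\sum (b-e)^{1/2}$ into an integral over the gap of a trace expression in $K(e)$; the weight $(b-e)^{1/2}$ cancels precisely the singular part of $K(e)$ identified above, leaving an $L^1$-bound on $|V|$. Boundedness of the edge Bloch function is what makes the bound critical at the $L^1$ level, exactly as in \cite{FSW} at the bottom of the spectrum.

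The principal obstacle is the gap Birman--Schwinger step itself: $(H_0-e)^{-1}$ is indefinite on the spectral projector over the gap, so the straightforward min-max counting that works at the bottom of the spectrum is unavailable, and $K(e)$ need not be self-adjoint. A substitute counting principle--presumably via Fredholm determinants or singular values of $K(e)$--robust enough to apply uniformly across the full gap rather than only very near one band edge is the essential new tool that must be developed before the Weyl-solution asymptotics and the sum-rule estimate of the second and third steps can be combined into the stated bound.
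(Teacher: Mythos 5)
You correctly identify the crux: the straightforward Birman--Schwinger counting principle at the bottom of the spectrum does not apply inside a gap because $(H_0-e)^{-1}$ is sign-indefinite there, and when $V$ changes sign the operator $K(e)=|V|^{1/2}(H_0-e)^{-1}\mathrm{sgn}(V)|V|^{1/2}$ is not even self-adjoint. But your proposal leaves exactly this step open (``presumably via Fredholm determinants or singular values of $K(e)$''), so as written it is not a proof.

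The paper's resolution is different from what you sketch and is the main new tool: it never works with the indefinite $K(e)$ at all. Instead one proves the \emph{decoupled} gap Birman--Schwinger bound Theorem~\ref{T1.4} (going back to Pushnitski~\cite{Push}),
\[
N(A+B_+-B_-\in(\alpha,\beta))\ \le\ N\bigl(B_+^{1/2}(A-\alpha)^{-1}B_+^{1/2}<-1\bigr)\ +\ N\bigl(B_-^{1/2}(A-\beta)^{-1}B_-^{1/2}>1\bigr),
\]
with $B_\pm=V_\pm$. Each term on the right is a \emph{sign-definite} Birman--Schwinger operator, evaluated at a fixed band edge, and one is back in the self-adjoint min-max world. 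The proof of this inequality is a separate piece of structure (Proposition~\ref{P2.3}, an order-of-switching-on identity for eigenvalue crossings, proved by finite-rank approximation or by index-of-projections theory); that is the substitute counting principle you anticipated needing, and it is not a Fredholm-determinant argument. The earlier naive route \eqref{1.20} (via $A\to A+B_+\to A+B_+-B_-$) produces $B_+B_-$ cross-terms that block an $L^1$ bound; removing them via Proposition~\ref{P2.3} is what makes the critical bound possible.

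The remaining steps are also routed differently from your sketch. Rather than your Weyl-solution Green's function decomposition and trace-based sum rule, the paper uses the elementary identity \eqref{4.14} to convert $\sum(b-e)^{1/2}$ into $\int_0^{b-a}\tfrac12\tau^{-1/2}N(H_0+V\in[a,b-\tau])\,d\tau$, then, having decoupled, splits each sign-definite Birman--Schwinger operator by inserting $P_{[b,b+\veps)}(H_0)$ (the far-from-edge part is uniformly trace class via Lemma~\ref{L4.3}), and finally controls the near-edge part through the Floquet eigenfunction expansion: writing $u(x,k)=e^{i\theta(k)x}v(x,0)+\text{O}(k^2)$ and dominating the resulting kernel by $v(\cdot,0)(-c_4\tfrac{d^2}{dx^2}+\tau)^{-1}\bar v(\cdot,0)$ reduces the problem to the classical Hundertmark--Lieb--Thomas bound for $-\tfrac{d^2}{dx^2}-CV_\mp$. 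Your ``rank-one singular piece of $K(e)$'' intuition is pointing in a compatible direction, but the comparison the paper actually makes is with the free Laplacian, after decoupling, via the eigenfunction expansion of $H_0$ just above $b$ (not via the in-gap Weyl solutions), and the $L^1$ criticality is inherited from the HLT bound rather than produced by a new sum rule. To complete your proposal you would still need to supply both the decoupling mechanism and the band-edge comparison in a form giving the full $L^1$ bound.
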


\begin{remark} This is an analog of a result of Damanik--Killip--Simon \cite{DKS-Ann} for perturbations of periodic
Jacobi matrices; they used what they call the magic formula to reduce to a critical Lieb--Thirring bound for
matrix perturbations of a free Jacobi matrix. They have a magic formula for periodic Schr\"odinger operators, but
it yields a nonlocal unperturbed object for which there is no obvious Lieb--Thirring bound.
\end{remark}

The other stream motivating this work goes back to a conjecture of Nevai \cite{Nev92} that if a Jacobi matrix, $J$, obeys
\begin{equation} \lb{1.6}
\sum_{n=1}^\infty \, \abs{a_n-1} + \abs{b_n} <\infty
\end{equation}
then its spectral measure,
\begin{equation} \lb{1.7}
d\rho(x) = f(x)\, dx + d\rho_\s (x)
\end{equation}
(with $d\rho_\s$ singular) obeys a Szeg\H{o} condition
\begin{equation} \lb{1.8}
\int_{-2}^2 (4-x^2)^{-1/2} \log(f(x))\, dx >-\infty
\end{equation}
This conjecture was proven by Killip--Simon \cite{KS}, that is,

\begin{theorem}[Killip--Simon \cite{KS}]\lb{T1.2} \eqref{1.6} implies \eqref{1.8}.
\end{theorem}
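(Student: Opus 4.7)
The plan is to combine the step-by-step (``$C_0$'') sum rule of Killip--Simon with the critical Lieb--Thirring bound of Hundertmark--Simon for $\ell^1$ perturbations of the free Jacobi matrix. The sum rule has the schematic form
\[
Z(J) + \sum_j F(E_j) \;\leq\; Q(J),
\]
where $Z(J) = -\tfrac{1}{2\pi}\int_{-2}^{2}\log\!\bigl(f(x)\sqrt{4-x^2}/2\bigr)(4-x^2)^{-1/2}\,dx$ is the entropy-like term whose finiteness is equivalent (up to an explicit additive constant) to the Szeg\H{o} condition \eqref{1.8}, $Q(J)=\sum_n G(a_n)+\tfrac12\sum_n b_n^2$ with $G(a)=a^2-1-\log a^2\ge 0$, and $F\ge 0$ is a function vanishing at $\pm 2$ like $\dist(E,[-2,2])^{3/2}$. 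My strategy would be to show that the $\ell^1$ hypothesis \eqref{1.6} makes both $Q(J)$ and the eigenvalue sum $\sum_j F(E_j)$ finite, and then read off $Z(J)<\infty$ from the sum rule.

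First I would verify $Q(J)<\infty$: since $a_n\to 1$ and $b_n\to 0$, the map $G$ satisfies $G(a_n)=O((a_n-1)^2)$ for large $n$, and the containment $\ell^1\subset\ell^2$ gives $\sum (a_n-1)^2+b_n^2<\infty$; the finitely many initial terms cause no difficulty as all $a_n>0$. Next, for the eigenvalue sum I would invoke the Hundertmark--Simon critical Lieb--Thirring bound (the Jacobi analog of \eqref{1.5}):
\[
\sum_{E_j\in\sigma_\di(J)\setminus[-2,2]} \dist(E_j,[-2,2])^{1/2} \;\leq\; c\sum_n\bigl(|a_n-1|+|b_n|\bigr),
\]
which is finite by \eqref{1.6}. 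Because $J$ is bounded, its eigenvalues lie in a fixed bounded set, so $\dist(E_j,[-2,2])^{3/2}\leq M\cdot\dist(E_j,[-2,2])^{1/2}$ and hence $\sum_j F(E_j)<\infty$. Combining the two bounds with the sum rule yields $Z(J)<\infty$, which upon subtracting the finite contribution $-\tfrac{1}{2\pi}\int_{-2}^{2}\log(\sqrt{4-x^2}/2)(4-x^2)^{-1/2}\,dx$ is precisely \eqref{1.8}.

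The main obstacle is establishing the one-sided inequality version of the sum rule without assuming any of the three quantities is a priori finite. The equality version (via Case sum rules and contour integration of the $m$-function) requires all the Szeg\H{o}, eigenvalue, and coefficient sums to be simultaneously finite; here one knows only $Q(J)<\infty$ at the outset. The resolution is a semicontinuity argument: approximate $J$ by finite rank perturbations $J^{(N)}$ of $J_0$ (for which the sum rule is a triviality), pass to the limit using lower semicontinuity of the entropy $Z$ under weak-$*$ convergence of spectral measures and Fatou's lemma on $\sum_j F(E_j)$. Once this one-sided sum rule is in hand, the Lieb--Thirring step is what lets the argument bypass the $\ell^2$ machinery used in the original Killip--Simon proof and fits the theorem cleanly into the framework of critical Lieb--Thirring bounds developed in this paper.
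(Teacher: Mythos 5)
Your overall strategy matches the paper's outline exactly: combine a step-by-step sum rule with the Hundertmark--Simon critical Lieb--Thirring bound, and handle the semicontinuity issue by finite-rank approximation. However, you have conflated two different Killip--Simon sum rules, and the hybrid you wrote down does not exist. The right-hand side $Q(J)=\sum_n G(a_n)+\tfrac12\sum_n b_n^2$ with $G(a)=a^2-1-\log a^2$, together with an eigenvalue functional $F(E)\sim\dist(E,[-2,2])^{3/2}$ at the band edges, is the $P_2$ sum rule --- but the left-hand side of the $P_2$ sum rule is the \emph{quasi-Szeg\H{o}} integral with weight $\sqrt{4-x^2}$, not the Szeg\H{o} functional $Z(J)$ with weight $(4-x^2)^{-1/2}$ that you wrote. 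Pairing the Szeg\H{o}-weighted $Z(J)$ with $F\sim\dist^{3/2}$ and that $Q(J)$ is not a theorem, and if you did run the argument through $P_2$ you would only obtain the quasi-Szeg\H{o} condition, which is strictly weaker than \eqref{1.8}.

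What is actually needed here is the $C_0$ (step-by-step Case/Szeg\H{o}) sum rule, in which the eigenvalue term is $\sum_j\log|\beta_j|$ with $\beta_j=\tfrac12(E_j+\sqrt{E_j^2-4})$, so $F(E)\sim\dist(E,[-2,2])^{1/2}$ at the edges, and the right-hand side involves only $\sum_n\log a_n$ --- the $b_n$'s do not appear in the sum rule at all; they enter the argument solely through the Lieb--Thirring control of the eigenvalue term. This is the whole structural point of the paper's two-step scheme (items (a) and (b) in the discussion after Theorem~\ref{T1.2}): the $1/2$-power in \eqref{1.10} is exactly what the $C_0$ sum rule demands, which is why the \emph{critical} Lieb--Thirring bound of Hundertmark--Simon is the precise ingredient required, and why the $\ell^1$ hypothesis \eqref{1.6} (rather than $\ell^2$) is needed. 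Your step bounding $\dist^{3/2}$ by $M\cdot\dist^{1/2}$ using boundedness of $J$ is correct but moot once the right sum rule is in place. With the $C_0$ sum rule substituted in, your finiteness checks ($\sum|\log a_n|<\infty$ from \eqref{1.6}; eigenvalue sum finite from Hundertmark--Simon) and your closing semicontinuity remark give a correct proof along the same lines as the paper's reference to \cite{KS}.
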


Their method, the model for analogs, is in two parts:
\begin{SL}
\item[(a)] Prove a theorem that
\begin{equation} \lb{1.9}
\prod_{n=1}^N a_n\to 1
\end{equation}
plus
\begin{equation} \lb{1.10}
\sum_{e\in\sigma_d(J)} \dist(e,\sigma_e(J))^{1/2} <\infty
\end{equation}
implies \eqref{1.8}. This generalizes results of Szeg\H{o}, Shohat, and Nevai (see \cite{Rice} for the history).

\item[(b)] Prove a critical Lieb--Thirring bound (in this case, done by Hundertmark--Simon \cite{HS}) to prove
\eqref{1.6} implies \eqref{1.10}.
\end{SL}

\smallskip
Since \eqref{1.6} clearly implies \eqref{1.9}, we get \eqref{1.8}. This strategy was exploited by Damanik--Killip--Simon
\cite{DKS-Ann} to prove an analog of Nevai's conjecture for perturbations of periodic Jacobi matrices. Here we are interested
in a larger class called finite gap Jacobi matrices. Let $\fre$ be a closed subset of $\bbR$ whose complement has $\ell$
open intervals plus two unbounded pieces: $\fre=\fre_1\cup\cdots\cup \fre_{\ell+1}$ and $\fre_j=[\alpha_j,\beta_j]$ with
$\alpha_1 < \beta_1 < \alpha_2 < \cdots < \alpha_{\ell+1} < \beta_{\ell+1}$. Periodic Jacobi matrices have $\sigma_\e
(J)$ equal to such an $\fre$, where each $\fre_j$ has rational harmonic measure, so such $\fre$'s are a small subset of all
finite gap $\fre$'s. In such a case, the set of periodic Jacobi matrices with $\sigma_\e(J)=\fre$ is a torus of
dimension $\ell$. For general $\fre$'s, there is still a natural  $\ell$-dimensional isospectral torus of almost
periodic $J$'s with $\sigma_\e (J)=\fre$. It is described, for example, in \cite{CSZ1}.

Here is another main result of this paper:

\begin{theorem}\lb{T1.3} Let $\{a_n^{(0)}, b_n^{(0)}\}_{n=1}^\infty$ be the Jacobi parameters for an element of
the isospectral torus of a finite gap set, $\fre$. Let $\{a_n,b_n\}$ be a set of Jacobi parameters obeying
\begin{equation} \lb{1.11}
\sum_{n=1}^\infty\, \abs{a_n-a_n^{(0)}} + \abs{b_n-b_n^{(0)}} <\infty
\end{equation}
Then the spectral measure, $d\rho$, of this perturbed Jacobi matrix has the form \eqref{1.7} where
\begin{equation} \lb{1.12}
\int_\fre \dist(x,\bbR\setminus\fre)^{-1/2} \log(f(x))\, dx > -\infty
\end{equation}
\end{theorem}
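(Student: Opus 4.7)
The plan is to execute the two-step Killip--Simon strategy in the finite gap setting, in exact parallel with what Damanik--Killip--Simon did for periodic Jacobi matrices, with the one genuinely new ingredient being a critical Lieb--Thirring bound for eigenvalues inside a spectral gap. Step~(a) is a Szeg\H{o}-type theorem that converts a finite Lieb--Thirring eigenvalue sum, together with convergence to the isospectral torus, into the Szeg\H{o} condition \eqref{1.12}. Step~(b) is the critical Lieb--Thirring bound that produces the finite eigenvalue sum from \eqref{1.11}.

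For step~(a), I would invoke the finite gap Szeg\H{o}/sum-rules theorem from the Christiansen--Simon--Zinchenko program (cf.\ \cite{CSZ1} and its companions): if $\{a_n,b_n\}$ converge in $\ell^2$ to an element of the isospectral torus of $\fre$ and if
\begin{equation*}
\sum_{e \in \sigma_\di(J)} \dist(e,\fre)^{1/2} < \infty,
\end{equation*}
then the absolutely continuous part of the spectral measure satisfies \eqref{1.12}. Since \eqref{1.11} is strictly stronger than $\ell^2$ convergence to $\{a_n^{(0)},b_n^{(0)}\}$, this reduces Theorem~\ref{T1.3} to bounding the Lieb--Thirring eigenvalue sum.

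For step~(b), \eqref{1.11} makes $J - J^{(0)}$ trace class, so Weyl's theorem gives $\sigma_\e(J) = \fre$; every eigenvalue lies either below $\alpha_1$, above $\beta_{\ell+1}$, or in one of the $\ell$ finite gaps $(\beta_j,\alpha_{j+1})$. Eigenvalues outside the convex hull of $\fre$ are controlled by the Jacobi analog of the Frank--Simon--Weidl bound \eqref{1.5}, applicable because every element of the isospectral torus has a regular ground state at the outer edges. Eigenvalues inside each finite gap are controlled by the Jacobi analog of Theorem~\ref{T1.1}, i.e., the critical Lieb--Thirring bound in gaps proved in this paper via the new general Birman--Schwinger bound in gaps. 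Summing over all $\ell+2$ spectral pieces yields
\begin{equation*}
\sum_{e \in \sigma_\di(J)} \dist(e,\fre)^{1/2} \leq c \sum_{n=1}^\infty \bigl( |a_n - a_n^{(0)}| + |b_n - b_n^{(0)}| \bigr) < \infty,
\end{equation*}
which via step~(a) yields \eqref{1.12}.

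The main obstacle is plainly the interior gap contribution. For eigenvalues outside the convex hull one can compare to a half-line spectral edge that behaves like the bottom of a free spectrum, and the standard variational critical Lieb--Thirring machinery transfers with only minor adjustments. Inside a finite gap, both endpoints matter simultaneously: the resolvent $(J^{(0)}-\lambda)^{-1}$ at $\lambda$ in the gap is indefinite, so the usual Birman--Schwinger principle loses its sign structure and monotonicity, and a naive comparison argument cannot produce the correct square-root weight at both endpoints at once. Overcoming this is precisely what the paper's new general Birman--Schwinger bound in gaps is designed to do; once the Jacobi version of Theorem~\ref{T1.1} is in hand, the remaining deduction of Theorem~\ref{T1.3} is short.
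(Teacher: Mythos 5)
Your two-step strategy (critical Lieb--Thirring bound feeding a Christiansen--Simon--Zinchenko Szeg\H{o}-type sum rule) is exactly the paper's, but your statement of the CSZ input is misremembered in a way that matters. Theorem~4.5 of \cite{CSZ2} requires, alongside $\sum_e \dist(e,\fre)^{1/2}<\infty$, that $\lim_n a_1\cdots a_n/C(\fre)^n$ exist in $(0,\infty)$ --- a multiplicative condition on the $a$-sequence, not $\ell^2$ proximity of $(a_n,b_n)$ to a torus element. The $\ell^2$ hypothesis belongs to the Killip--Simon-type characterization theorem that is paired with the $3/2$-power eigenvalue sum; it is not claimed by CSZ, and not obviously true, that $\ell^2$ plus the $1/2$-power sum forces \eqref{1.12}. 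This is easy to repair: \eqref{1.11} gives convergence of $\prod(a_n/a_n^{(0)})$, and $a_1^{(0)}\cdots a_n^{(0)}/C(\fre)^n$ already converges for torus elements (\cite[Cor.~7.4]{CSZ2}), so the required product limit exists. But as written your reduction rests on a sum rule that is not the one available.

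The second gap is the half-line/whole-line mismatch. Theorem~\ref{T6.1} and Corollary~\ref{C6.2} concern whole-line Jacobi operators, while $d\rho$ in Theorem~\ref{T1.3} is the spectral measure of a half-line matrix; an eigenvalue interlacing argument is needed to transfer finiteness of the whole-line eigenvalue sum to the half-line $J$, and your proposal silently conflates the two. Beyond that, your suggestion to cover the two unbounded components of $\bbR\setminus\fre$ with the Frank--Simon--Weidl bound \eqref{1.5} is a viable substitute for the paper's route via Proposition~\ref{P4.5} together with Theorem~\ref{T6.1} on $(\alpha_1-1,\alpha_1)$ and $(\beta_{\ell+1},\beta_{\ell+1}+1)$, provided one verifies that isospectral-torus elements have regular ground states in the FSW sense, a fact you assert but would need to check.
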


One part of our proof involves the general theory of eigenvalues in gaps, a subject with considerable literature
(see \cite{AADH,ADH,Bir90,Bir91,Bir91-ASM,Bir91-FAA,Bir95,Bir97,Bir98,BLS,BP,BR,BW,GGHK,GS,Hem89,Hem92,Hem97,HS08,
Klaus,Lev,Saf98,Saf01JMAA,Saf01,Sob1,Sob2}).
We will find a general Birman--Schwinger-type bound that could also be used to simplify many of these earlier works.
To describe this bound, we make several definitions.

If $C$ is selfadjoint and $I\subset\bbR$ and $I\cap\sigma_\e(C)=\emptyset$, we define
\begin{equation} \lb{1.13}
N(C\in I) = \dim(\ran(P_I(C)))
\end{equation}
with $P_I(\cdot)$ a spectral projection. $N(C>\alpha)=N(C\in (\alpha,\infty))$.

Recall that if $A$ is a selfadjoint operator bounded from below, a quadratic form $B$ is called relatively $A$-compact
if $Q(A)\subset Q(B)$, and for $e<\inf\sigma(A)$, $(A-e)^{-1/2} B(A-e)^{-1/2}$ is compact, that is, for some 
compact operator $K$ and all $u,v\in\calH$, 
\[
B((A-e)^{-1/2} u, (A-e)^{-1/2} v) = (u,Kv) 
\]
Often, $B$ is also an operator, in which case we may refer to an operator being form compact. The Birman--Schwinger 
principle says that if $B_- \geq 0$ is relatively $A$-compact and $E<\inf \sigma(A)$, then (see \cite{Hun})
\begin{equation} \lb{1.14}
N(A-B_- <E) = N(B_-^{1/2} (A-E)^{-1} B_-^{1/2} >1)
\end{equation}

There is a slight abuse of notation in \eqref{1.14} since a form need not have a square root. We need to suppose
our positive forms, $B$, can be written $C^*C$, where $C\colon\calH_{+1}\to\calK$ with $\calH_{+1},\calH_{-1}$ the usual
scale of spaces (see \cite{RS1}) and $\calK$ an arbitrary space (usually $\calK=\calH$). $B^{1/2}(A-E)^{-1} B^{1/2}$
is then $C(A-E)^{-1} C^*$. We call a form of this type ``factorizable'' when $C$ is compact as a map from
$\calH_{-1}$ to $\calK$. In our examples, since either $B$ is bounded and $C=\sqrt{B}$ or $B$ is multiplication by
$f\geq 0$ with $f\in L^1$ and $C=$ multiplication by $\sqrt{f}$, we'll use the simpler notation.

Suppose $E\notin\sigma(A)$ and $B\geq 0$ is relatively compact. As $x$ varies from $0$ to $1$, the discrete eigenvalues
of $A\pm xB$ are analytic in $x$ and strictly monotone, so there are only finitely many such $x$'s for which $E\in
\sigma (A\pm xB)$. We define $\delta_\pm (A,B;E)$ to be the number of solutions (counting multiplicity) with $x$
in $(0,1)$. \eqref{1.14} is proven by noting that
\begin{equation} \lb{1.15}
N(A-B_- <E) =\delta_- (A,B_-;E)
\end{equation}
and
\begin{equation} \lb{1.16}
\delta_-(A,B_-;E) = N(B_-^{1/2} (A-E)^{-1} B_-^{1/2} >1)
\end{equation}

Prior approaches to eigenvalues in gaps rely on going from $A$ to $A+B$ via $A\to A+B_+ \to A+ B_+ -B_-$ or
via $A\to A-B_-\to A + B_+ - B_-$. Thus, for example, by the same argument that leads to \eqref{1.15},
\begin{equation} \lb{1.17}
\begin{split}
N(A+B_+ - B_- \in  (\alpha,\beta)) &= \delta_+(A,B_+; \alpha) - \delta_+ (A,B_+;\beta) \\
&\quad + \delta_-(A+B_+, B_-;\beta) - \delta_- (A+B_+, B_-;\alpha)
\end{split}
\end{equation}

The analogs of \eqref{1.16} for $B\geq 0$ are
\begin{align}
\delta_- (A,B;E) &= N(B^{1/2} (A-E)^{-1} B^{1/2} >1 ) \lb{1.18} \\
\delta_+ (A,B;E) &= N(B^{1/2} (A-E)^{-1/2} B^{1/2} <-1) \lb{1.19}
\end{align}
Dropping the negative terms in \eqref{1.17} leads to
\begin{equation} \lb{1.20}
\begin{split}
N(A+B  \in (\alpha,\beta)) & \leq N(B_+^{1/2} (A-\alpha)^{-1} B_+^{1/2} <-1) \\
&\qquad + N(B_-^{1/2} (A+B_+ -\beta)^{-1} B_-^{1/2} >1)
\end{split}
\end{equation}
The $B_+ B_-$ cross-terms in \eqref{1.20} make it difficult to get Lieb--Thirring-type bounds although, with the
other results of this paper, one could prove Theorem~\ref{T1.3} from \eqref{1.20}. What allows us to get Lieb--Thirring
bounds is the following improvement of \eqref{1.20} that has no cross-terms:

\begin{theorem}\lb{T1.4} Let $B_+$ and $B_-$ be nonnegative, relatively form compact, factorizable perturbations of a
semibounded selfadjoint operator, $A$. Let $[\alpha,\beta]\subset\bbR\setminus\sigma(A)$. Suppose $\alpha,\beta\notin
\sigma(A+B_+)\cup\sigma(A-B_-)\cup\sigma(A+B_+-B_-)$. Then
\begin{equation} \lb{1.21}
\begin{split}
N(A+B_+ -B_- \in  (\alpha,\beta)) &\leq N(B_+^{1/2} (A-\alpha)^{-1} B_+^{1/2} < -1) \\
&\qquad + N(B_-^{1/2} (A-\beta)^{-1} B_-^{1/2} >1)
\end{split}
\end{equation}
\end{theorem}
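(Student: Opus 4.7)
The plan is to express $N(A+B_+-B_-\in(\alpha,\beta))$ as a difference of renormalized spectral counts at $\alpha$ and $\beta$, and then attack each endpoint with a \emph{different} order of Birman--Schwinger. For any perturbation $H$ of $A$ with $H-A$ relatively form compact and any $E\notin\sigma_\e(A)=\sigma_\e(H)$, introduce the finite quantity $D(H,E):=N(H<E)-N(A<E)$. Since $[\alpha,\beta]\cap\sigma(A)=\emptyset$, $N(A<\alpha)=N(A<\beta)$, and the hypothesis $\alpha,\beta\notin\sigma(A+B_+-B_-)$ gives
\[
N(A+B_+-B_-\in(\alpha,\beta))=D(A+B_+-B_-,\beta)-D(A+B_+-B_-,\alpha).
\]
The monotone motion of the eigenvalues of $A+xB$ (upward) and of $A-xB$ (downward) as $x$ runs over $(0,1)$, together with the definition of $\delta_\pm$, yields the gap version of the Birman--Schwinger principle,
\[
D(A+B,E)-D(A,E)=-\delta_+(A,B;E),\qquad D(A-B,E)-D(A,E)=+\delta_-(A,B;E),
\]
for $B\geq 0$ relatively form compact and $E$ outside the relevant spectra.

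The key step is an asymmetric bookkeeping. At $\alpha$, I decompose $A+B_+-B_-=(A+B_+)-B_-$, turning on $B_+$ first:
\[
D(A+B_+-B_-,\alpha)=D(A,\alpha)-\delta_+(A,B_+;\alpha)+\delta_-(A+B_+,B_-;\alpha).
\]
At $\beta$, I instead decompose as $(A-B_-)+B_+$, turning on $-B_-$ first:
\[
D(A+B_+-B_-,\beta)=D(A,\beta)+\delta_-(A,B_-;\beta)-\delta_+(A-B_-,B_+;\beta).
\]
Subtracting and using $D(A,\alpha)=D(A,\beta)$,
\[
N(A+B_+-B_-\in(\alpha,\beta))=\delta_+(A,B_+;\alpha)+\delta_-(A,B_-;\beta)-\delta_+(A-B_-,B_+;\beta)-\delta_-(A+B_+,B_-;\alpha).
\]
The two cross-terms are nonnegative, so dropping them and rewriting the first two via \eqref{1.18}--\eqref{1.19} yields \eqref{1.21}.

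The real insight, rather than an obstacle, is the necessity of choosing \emph{opposite} orders of activation at the two endpoints: only then do the cross-terms $\delta_+(A-B_-,B_+;\beta)$ and $\delta_-(A+B_+,B_-;\alpha)$ appear with signs that permit them to be dropped for an upper bound. The one point requiring care is the justification that all four $\delta_\pm$ are finite and correctly computed on $(0,1)$; this uses the relative form compactness of $B_\pm$ together with the hypothesis that $\alpha,\beta$ avoid $\sigma(A)\cup\sigma(A+B_+)\cup\sigma(A-B_-)\cup\sigma(A+B_+-B_-)$, so that no crossing of the parameter family occurs at the endpoints $x=0$ or $x=1$.
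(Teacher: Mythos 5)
Your strategy is sound in spirit and, once made rigorous, reproduces the argument the paper gives in its appendix via the index theory of pairs of projections (crediting Pushnitski). But as written there is a genuine gap at the very first step: you declare $D(H,E):=N(H<E)-N(A<E)$ to be ``the finite quantity,'' yet in the settings this theorem is aimed at $A$ has essential spectrum below $E$ (e.g., a periodic Schr\"odinger or Jacobi operator with $E$ in an internal gap), so $N(A<E)=\infty$ and the naive difference is $\infty-\infty$. The object you propose to manipulate is not defined. Everything downstream --- the identity $D(A+B,E)-D(A,E)=-\delta_+(A,B;E)$, the freedom to activate $B_+$ and $-B_-$ in opposite orders at $\alpha$ and at $\beta$, the telescoping $N(A+B_+-B_-\in(\alpha,\beta))=D(\cdot,\beta)-D(\cdot,\alpha)$ --- rests on $D$ being a well-defined, additive quantity. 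The path-independence you tacitly invoke is precisely the content of the paper's Proposition~\ref{P2.3}, which requires a proof rather than an appeal to the intrinsic definition of $D$.

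To make $D$ rigorous one replaces the naive subtraction by the relative index $\idx\bigl(P_{(-\infty,E)}(H),P_{(-\infty,E)}(A)\bigr)$ of spectral projections; under relative form compactness of the perturbation the difference of projections is compact, so the index is defined, and its additivity property \eqref{A.2} supplies both of your decompositions. This is exactly what the appendix does: it proves $\idx(P_{(-\infty,E)}(A\pm B),P_{(-\infty,E)}(A))=\mp\delta_\pm(A,B;E)$ (Proposition~\ref{PA.1}) and then applies additivity, at which point your computation becomes the appendix proof. The paper's main-text argument takes a different, more elementary route: it establishes Proposition~\ref{P2.3} directly by approximating $A$ and $B_\pm$ by finite-rank operators (Lemmas~\ref{L2.4} and~\ref{L2.5}), where the order-independence of activation is a finite-dimensional triviality, and then combines it with \eqref{1.17} to reach \eqref{2.5}. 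You have found the right algebra and the key idea of choosing opposite activation orders at the two endpoints, but the renormalized counting function $D$ needs a rigorous construction (index of projections, spectral shift function, or a finite-dimensional limiting argument), and without one the proof does not stand.
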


\begin{notes} 1. $B_+,B_-$ need not be the positive and negative part of a single operator; in particular, they need
not commute.

\smallskip
2. While it is not stated as a formal theorem and not applied, Pushnitski \cite{Push} mentions \eqref{1.21}
explicitly (following Corollary~3.2 of his paper).
\end{notes}

We will prove this result in Section~\ref{s2}. We'll use this in Section~\ref{s3} to prove a CLR bound for perturbations
of $-\Delta+V_0$, where $V_0$ is a putatively generic periodic potential in $\bbR^\nu$, $\nu\geq 3$. Section~\ref{s4}
will provide an abstract result that shows that if there is an eigenfunction expansion near a gap, with eigenfunctions smooth
in a parameter $k$ with energies quadratic in $k$, then a critical Lieb--Thirring bound holds at that gap edge. The proof will
reduce to the original critical Lieb--Thirring bound, and so shed no light on why that bound holds (we regard both
proofs of that bound \cite{Weidl,HLT} as somewhat miraculous). In Section~\ref{s5}, we apply the abstract theorem to periodic
Schr\"odinger operators, and so get Theorem~\ref{T1.1}, and in Section~\ref{s6}, to finite gap Jacob matrices, and so
get Theorem~\ref{T1.3}. Section~\ref{s7} applies the decoupling results of Section~\ref{s2} to Dirac operators.

\medskip
We thank Alexander Pushnitski and Robert Seiringer for valuable discussions.

\section{Two Decoupling Lemmas} \lb{s2}

We'll need two basic decoupling facts: one, basically well known, and the second, Theorem~\ref{T1.4}.
All our operators act on a separable Hilbert space. The following is essentially a variant of the argument
used to prove the Ky Fan inequalities and is stated formally for ease of later use. It is well known.

\begin{proposition}\lb{P2.1} If $C$ and $D$ are compact selfadjoint operators and $c,d$ are in $(0,\infty)$, then
\begin{equation} \lb{2.1}
N(C+D > c+d )\leq N(C>c) + N(D>d)
\end{equation}
\end{proposition}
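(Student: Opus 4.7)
The plan is to use a standard min-max / dimension-counting argument of the type that yields Ky Fan-style inequalities. First note that since $C$ and $D$ are compact and $c,d>0$, both $n_1 := N(C>c)$ and $n_2 := N(D>d)$ are finite, so there is something to prove only when $N(C+D>c+d)$ is finite as well (else we could pick a finite-dimensional subspace of the right spectral subspace for $C+D$ and repeat the argument).

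Let $P$ denote the spectral projection of $C$ onto $(c,\infty)$ and $Q$ the spectral projection of $D$ onto $(d,\infty)$, so that $\dim\ran P = n_1$ and $\dim\ran Q = n_2$. I would argue by contradiction: suppose $N(C+D>c+d) \geq n_1+n_2+1$. Then there is a subspace $W$ with $\dim W = n_1+n_2+1$, spanned by orthonormal eigenvectors of $C+D$ whose eigenvalues strictly exceed $c+d$; for any nonzero $\psi \in W$ one has the strict inequality
\[
\langle (C+D)\psi,\psi\rangle > (c+d)\,\|\psi\|^2 .
\]

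Next, consider the linear map $T\colon W \to \ran P \oplus \ran Q$ defined by $T\psi = (P\psi, Q\psi)$. The codomain has dimension $n_1+n_2 < \dim W$, so $T$ has a nonzero kernel: there exists $\psi \in W$, $\psi\neq 0$, with $P\psi = 0$ and $Q\psi = 0$. The condition $P\psi=0$ places $\psi$ in the spectral subspace of $C$ corresponding to $(-\infty,c]$, whence $\langle C\psi,\psi\rangle \leq c\,\|\psi\|^2$; similarly $\langle D\psi,\psi\rangle \leq d\,\|\psi\|^2$. Adding these gives $\langle (C+D)\psi,\psi\rangle \leq (c+d)\,\|\psi\|^2$, contradicting the strict inequality above. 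Hence $N(C+D>c+d)\leq n_1+n_2$, which is \eqref{2.1}.

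There is no real obstacle here — the only thing to be careful about is the strictness of the inequality $\langle(C+D)\psi,\psi\rangle > (c+d)\|\psi\|^2$ on $W$, which requires that $W$ be built from eigenvectors whose eigenvalues are \emph{strictly} greater than $c+d$ (as encoded in the definition \eqref{1.13} using the open interval), and the finiteness of $n_1,n_2$ which is automatic for compact $C,D$ and strictly positive thresholds.
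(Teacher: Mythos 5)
Your proof is correct and takes essentially the same approach as the paper: a dimension-counting argument that produces a vector orthogonal to both spectral subspaces $\ran P_{(c,\infty)}(C)$ and $\ran P_{(d,\infty)}(D)$, so that the corresponding quadratic forms are bounded by $c$ and $d$ respectively. The paper invokes the min-max principle directly at this point, while you phrase it as a contradiction and unwind the min-max argument by hand via the map $T$; this is a cosmetic difference, not a different route.
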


\begin{proof} Let $m=N(C>c)$, $n=N(D>d)$, and $\varphi_1, \dots, \varphi_m$ (resp.\ $\psi_1, \dots, \psi_n$), a basis
for $\ran(P_{(c,\infty)}(C))$ (resp.\ $\ran(P_{(d,\infty)}(D))$). If $\eta\perp \{\varphi_j\}_{j=1}^m \cup
\{\psi_j\}_{j=1}^n$, then $\jap{\eta, C\eta}\leq c$ and $\jap{\eta, D\eta}\leq d$. It follows from the min-max
principle that $C+D$ has at most $n+m$ eigenvalues above $c+d$.
\end{proof}

\begin{corollary}\lb{C2.2} If $S,T$ are compact operators and $c,d >0$, then
\begin{equation} \lb{2.2}
N((S+T)^* (S+T) > c+d) \leq N(S^*\! S > \tfrac12\, c) + N(T^* T>\tfrac12\, d)
\end{equation}
\end{corollary}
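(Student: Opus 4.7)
The plan is a two-step reduction to Proposition~\ref{P2.1}. The key algebraic fact is the operator inequality
\[
(S+T)^*(S+T) \leq 2(S^*S+T^*T),
\]
which is just the polarization identity combined with $(S-T)^*(S-T)\geq 0$: expanding gives $S^*T+T^*S \leq S^*S+T^*T$, and adding $S^*S+T^*T$ to both sides yields the claim.

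From this, by the min-max principle (comparing eigenvalues of ordered selfadjoint compact operators),
\[
N\bigl((S+T)^*(S+T) > c+d\bigr) \leq N\bigl(2(S^*S+T^*T) > c+d\bigr) = N\bigl(S^*S + T^*T > \tfrac12(c+d)\bigr).
\]
Now I apply Proposition~\ref{P2.1} with $C=S^*S$, $D=T^*T$, and the thresholds $c/2$ and $d/2$, both of which lie in $(0,\infty)$. Note $S^*S$ and $T^*T$ are compact and selfadjoint since $S$ and $T$ are compact, so the hypotheses are met. This gives
\[
N\bigl(S^*S + T^*T > \tfrac12 c + \tfrac12 d\bigr) \leq N\bigl(S^*S > \tfrac12 c\bigr) + N\bigl(T^*T > \tfrac12 d\bigr).
\]
Chaining the two inequalities produces \eqref{2.2}.

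There is essentially no obstacle here; the only thing to be careful about is that the $\frac12$ factors appear correctly, which is exactly what the factor of $2$ in the operator inequality buys. No additional regularity or compactness hypothesis beyond what is already assumed is needed.
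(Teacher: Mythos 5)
Your proof is correct and follows exactly the paper's route: the operator inequality $(S+T)^*(S+T)\leq 2(S^*S+T^*T)$ (via positivity of $(S-T)^*(S-T)$), the monotonicity of eigenvalue counting under $\leq$, and then Proposition~\ref{P2.1} with $C=S^*S$, $D=T^*T$ and thresholds $c/2$, $d/2$. You merely spell out the intermediate min-max step that the paper leaves implicit.
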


\begin{proof} Immediate from \eqref{2.1} and
\begin{equation} \lb{2.3}
(S+T)^* (S+T) \leq (S+T)^* (S+T) + (S-T)^* (S-T) = 2(S^*\!S + T^*T)
\end{equation}
\end{proof}

The key to our proof of Theorem~\ref{T1.4} (which we recall appears in \cite{Push}) is the following
Proposition \ref{P2.3}, for which we give a
proof
involving finite approximation at the end of this section. The appendix has an alternate proof that is more natural
to those who know about the relative index of projections \cite{ASS}, but it involves some machinery that is not so
commonly known. $\delta_\pm$ are defined just before \eqref{1.15}.

\begin{proposition}\lb{P2.3} Let $A$ be a semibounded selfadjoint operator and $B_\pm$ two nonnegative relatively
$A$-compact factorizable forms. Let $E\notin\sigma(A), \sigma(A+B_+), \sigma(A-B_-), \sigma(A+B_+-B_-)$. Then
\begin{equation} \lb{2.4}
\delta_+(A,B_+;E)-\delta_-(A+B_+, B_-; E) = -\delta_-(A,B_-;E) + \delta_+(A-B_-,B_+;E)
\end{equation}
\end{proposition}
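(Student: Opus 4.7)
The strategy is a homotopy / eigenvalue-counting argument on the two-parameter family
\[
A(s,t) := A + sB_+ - tB_-, \qquad (s,t) \in [0,1]^2,
\]
whose four corners are exactly the four operators appearing in \eqref{2.4}. The four quantities $\delta_\pm$ in \eqref{2.4} are signed counts of crossings of the level $E$ by discrete eigenvalues of $A(s,t)$ along the four edges of this unit square, and \eqref{2.4} is the assertion that the total signed crossings along the L-shaped path $(0,0)\to(1,0)\to(1,1)$ equal those along $(0,0)\to(0,1)\to(1,1)$.

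To make this precise, pick an auxiliary $\lambda_0$ strictly below the spectrum of every $A(s,t)$ and set $N(s,t) := \dim P_{(\lambda_0,E)}(A(s,t))$. Since $B_+\geq 0$, every eigenvalue of $s\mapsto A+sB_+-tB_-$ is monotone nondecreasing in $s$ by first-order perturbation theory; since $-B_-\leq 0$, each eigenvalue of $t\mapsto A+sB_+-tB_-$ is monotone nonincreasing in $t$. No eigenvalue crosses $\lambda_0$ by construction, so every eigenvalue crossing $E$ along an edge contributes a net $\pm 1$ to $\Delta N$, and an edge-by-edge bookkeeping gives
\begin{align*}
N(1,0)-N(0,0) &= -\delta_+(A,B_+;E),\\
N(1,1)-N(1,0) &= +\delta_-(A+B_+,B_-;E),\\
N(0,1)-N(0,0) &= +\delta_-(A,B_-;E),\\
N(1,1)-N(0,1) &= -\delta_+(A-B_-,B_+;E).
\end{align*}
Equating the two expressions for $N(1,1)-N(0,0)$ coming from the two L-shaped paths and rearranging yields exactly \eqref{2.4}.

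The main obstacle is that in the infinite-dimensional, relatively form compact setting, $N(s,t)$ need not be finite: the essential spectrum of $A$ may extend far below $E$, and $-tB_-$ can push eigenvalues arbitrarily far down, so no single $\lambda_0$ catches everything. This is where the ``finite approximation'' enters. Using the factorizations $B_\pm = C_\pm^* C_\pm$ with $C_\pm\colon\calH_{+1}\to\calK$ compact, approximate $C_\pm$ in operator norm by finite-rank $C_\pm^{(n)}$ and set $B_\pm^{(n)} := (C_\pm^{(n)})^* C_\pm^{(n)}$. Then each $A^{(n)}(s,t)$ differs from $A$ by a finite-rank operator, only finitely many eigenvalues migrate under the homotopy, and a legitimate $\lambda_0$ exists; the Birman--Schwinger representations \eqref{1.18}--\eqref{1.19} exhibit every $\delta_\pm^{(n)}$ as an eigenvalue count for a finite rank self-adjoint matrix, making the counting argument above fully rigorous at level $n$. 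Finally, one passes to $n\to\infty$: the four Birman--Schwinger operators converge in operator norm, and the hypothesis $E\notin\sigma(A)\cup\sigma(A+B_+)\cup\sigma(A-B_-)\cup\sigma(A+B_+-B_-)$ ensures that $\pm 1$ is not in the spectrum of any of the four limiting compact self-adjoint Birman--Schwinger operators, so each $\delta_\pm^{(n)}$ eventually equals its limit and the finite-rank identity upgrades to \eqref{2.4}.
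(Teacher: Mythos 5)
Your overall strategy—reduce to a finite-dimensional situation by approximation, verify the identity there by an elementary eigenvalue-counting/crossing argument, and pass to the limit using norm convergence of the Birman--Schwinger operators and the hypothesis that $\pm 1$ is not in their spectrum—is exactly the paper's approach. Your edge-by-edge bookkeeping on $[0,1]^2$ and the monotonicity observations are correct, and your use of the factorization $B_\pm = C_\pm^* C_\pm$ to produce finite-rank approximations $B_\pm^{(n)}$ matches the paper's Lemma~\ref{L2.4}.

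However, there is a genuine gap in the middle of your argument. You correctly identify the obstacle that $N(s,t)=\dim P_{(\lambda_0,E)}(A(s,t))$ can be infinite, but your fix—replacing $B_\pm$ by finite-rank $B_\pm^{(n)}$—does not remove it. When $E$ lies in a gap of $\sigma(A)$ that sits above $\inf\sigma_{\ess}(A)$ (which is the whole point of this paper), the operator $A$ itself contributes infinite spectral multiplicity below $E$, and this persists after a finite-rank perturbation: $A^{(n)}(s,t) = A + sB_+^{(n)} - tB_-^{(n)}$ still has $\dim P_{(-\infty,E)} = \infty$. The claim ``a legitimate $\lambda_0$ exists'' only addresses boundedness from below, not finiteness of the count, so the $N(s,t)$ bookkeeping cannot be carried out at level $n$ as written. (Trying to repair this by counting only inside the gap does not immediately work either, since eigenvalues may also cross the lower gap edge during the homotopy.) The paper closes exactly this hole with Lemma~\ref{L2.5}: one also replaces $A$ by a finite-rank truncation $A_n = Q_n f_n(A) Q_n$, chosen so that the relevant Birman--Schwinger operators $B^{1/2}(A_n - EQ_n)^{-1}B^{1/2}$ converge in norm, at which point the whole problem is genuinely finite-dimensional, both sides of \eqref{2.4} equal $\dim\ran P_{(-\infty,E)}(A_n) - \dim\ran P_{(-\infty,E)}(A_n + (B_n)_+ - (B_n)_-)$ by inspection, and the limiting argument you describe upgrades this to the general case. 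You need that second approximation step for the proof to go through.
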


\begin{remark} This asserts the intuitive fact that the net number of eigenvalues crossing $E$ in going from $A$ to
$A+B_+-B_-$ does not depend on the order in which we turn on $B_+$ and $B_-$. It is obvious in the finite-dimensional
case and we'll prove it by approximation by finite-dimensional matrices. It allows us to use different orders $A\to
A+B_+\to A+B_+-B_-$ and $A\to A-B_-\to A+B_+-B_-$ at $\alpha$ and at $\beta$.
\end{remark}

\begin{proof}[Proof of Theorem~\ref{T1.4}] By \eqref{2.4} (with $E=\beta)$ and \eqref{1.17},
\begin{equation} \lb{2.5}
\begin{split}
N(A+B_+ -B_-\in (\alpha,\beta)) &= \delta_+(A,B_+;\alpha) - \delta_-(A+B_+, B_-;\alpha) \\
&\qquad + \delta_- (A,B_-;\beta) -\delta_+ (A-B_-, B_+;\beta)
\end{split}
\end{equation}
\eqref{1.21} then follows from \eqref{1.18} and \eqref{1.19} and dropping two negative terms.
\end{proof}

We now turn to the proof of Proposition~\ref{P2.3}.

\begin{lemma}\lb{L2.4} Let $A$ be semibounded and selfadjoint, $B$ a relatively $A$-compact, positive, factorizable
quadratic form, and $E\notin\sigma(A),\sigma(A\pm B)$. Then there exist $B_n$, positive, finite rank bounded operators,
so that $\delta_\pm (A,B_n;E)=\delta_\pm (A,B;E)$ and $B_n^{1/2} (A-E)^{-1} B_n^{1/2}$ converge in norm to $B^{1/2} (A-E)^{-1}
B^{1/2}$.
\end{lemma}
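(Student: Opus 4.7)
The plan is to reduce the form $B$ to a compact positive operator on $\calH$ via a spectral shift, spectrally truncate this compact operator, and lift the truncation back to bounded finite rank operators $B_n$ on $\calH$.

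Fix $E_0<\inf\sigma(A)$ and set $A_0=A-E_0>0$. Using the factorization $B=C^*C$ with $C\colon\calH_{+1}\to\calK$ compact as a map from $\calH_{-1}$ to $\calK$, the operator $D:=CA_0^{-1/2}\colon\calH\to\calK$ is compact, so
\[
\wti B := A_0^{-1/2}BA_0^{-1/2} = D^*D
\]
is a compact, positive, selfadjoint operator on $\calH$. Decompose $\wti B=\sum_{j\geq 1}\mu_j\,|\phi_j\rangle\langle\phi_j|$ with $\{\phi_j\}$ orthonormal and $\mu_j\geq 0$, and set
\[
\wti B_n := \sum_{\substack{j\leq n\\ \mu_j>0}} \mu_j\,|\phi_j\rangle\langle\phi_j|,
\]
so that $\wti B_n\to\wti B$ in operator norm by compactness. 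Each $\phi_j$ with $\mu_j>0$ satisfies $\phi_j=\mu_j^{-1}D^*D\phi_j\in\ran D^*\subset\ran A_0^{-1/2}=\dom A_0^{1/2}$, so $\eta_j:=A_0^{1/2}\phi_j$ is a well-defined element of $\calH$. Define
\[
B_n := \sum_{\substack{j\leq n\\ \mu_j>0}} \mu_j\,|\eta_j\rangle\langle\eta_j|,
\]
a bounded, finite rank, positive operator on $\calH$ with $A_0^{-1/2}B_nA_0^{-1/2}=\wti B_n$ by direct computation.

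Set $M:=A_0^{1/2}(A-E)^{-1}A_0^{1/2}$, a bounded selfadjoint operator on $\calH$. With the natural choices $C=\wti B^{1/2}A_0^{1/2}$ and $C_n=\wti B_n^{1/2}A_0^{1/2}$ (which satisfy $C^*C=B$ and $C_n^*C_n=B_n$), the Birman--Schwinger operators $C(A-E)^{-1}C^*$ and $C_n(A-E)^{-1}C_n^*$ reduce to $\wti B^{1/2}M\wti B^{1/2}$ and $\wti B_n^{1/2}M\wti B_n^{1/2}$, respectively. Since $\wti B_n\to\wti B$ in norm and the square root is norm-continuous on bounded positive operators by continuous functional calculus, $\wti B_n^{1/2}\to\wti B^{1/2}$ in norm, whence $\wti B_n^{1/2}M\wti B_n^{1/2}\to\wti B^{1/2}M\wti B^{1/2}$ in norm, providing the desired convergence. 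For the identity $\delta_\pm(A,B_n;E)=\delta_\pm(A,B;E)$, the Birman--Schwinger relations \eqref{1.18}--\eqref{1.19} identify both sides as the count of eigenvalues of the corresponding Birman--Schwinger operator exceeding $+1$ (respectively, lying below $-1$). The hypotheses $E\notin\sigma(A\pm B)$ ensure that neither $+1$ nor $-1$ is an eigenvalue of the compact limit operator $\wti B^{1/2}M\wti B^{1/2}$, so its spectrum in $\bbR\setminus[-1,1]$ consists of finitely many isolated eigenvalues. Upper semicontinuity of isolated discrete spectrum under norm convergence then yields matching counts for all sufficiently large $n$, and reindexing gives the equality for all $n$.

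The main technical obstacle is verifying that the lifted object $B_n$ is an honest bounded operator on $\calH$, which boils down to the regularity claim that eigenvectors of $\wti B$ with positive eigenvalue actually lie in $\dom A_0^{1/2}=Q(A)$. Once this is in place, the remainder is a standard application of the spectral theorem for compact selfadjoint operators, continuous functional calculus for the square root, and the stability of isolated discrete spectrum under norm perturbations.
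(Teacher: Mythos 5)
Your proof is correct and amounts to a detailed, explicit implementation of the paper's one-line argument. The paper simply notes that $C:\calH_{-1}\to\calK$ is compact and ``can be approximated by finite rank operators with vectors in $\calH$ and $\calK$,'' whereas you make this concrete by conjugating to the compact positive operator $\wti B = A_0^{-1/2}BA_0^{-1/2} = D^*D$ on $\calH$, truncating its spectral decomposition, and lifting via $\eta_j = A_0^{1/2}\phi_j$; your regularity check $\phi_j\in\ran D^*\subset\dom A_0^{1/2}$ is exactly what justifies the paper's parenthetical ``with vectors in $\calH$,'' and the reduction of the $\delta_\pm$-equality to norm convergence via \eqref{1.18}--\eqref{1.19}, stability of isolated spectrum, and reindexing is the same mechanism both proofs use.
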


\begin{proof} By \eqref{1.18} and \eqref{1.19}, it suffices to prove the norm convergence. Let $\calH_{\pm 1}$ be the
scale associated to $A$ (see \cite{RS1}). $B\colon\calH_{-1}\to\calH_{+1}$ with $B=C^*C$. $C$ is compact, so it can be
approximated by finite rank operators with vectors in $\calH$ and $\calK$.
\end{proof}

\begin{lemma} \lb{L2.5} Let $A$ be a semibounded operator with $E\notin\sigma(A)$ and $F\subset\calH$ a finite-dimensional
space. Then there exist $A_n$, finite rank operators, with $F\subset\ran(A_n-EQ_n)$ {\rm{(}}where $Q_n$ is the projection
onto $\ran(A_n)${\rm{),}} so that $B^{1/2} (A_n-EQ_n)^{-1} B^{1/2} \to B^{1/2} (A-E)^{-1} B^{1/2}$ in norm as $n\to\infty$
for all finite rank, nonnegative $B$ with $\ran(B)\subset F$.
\end{lemma}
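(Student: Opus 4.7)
The plan is to construct $A_n$ as compressions of $A$ to carefully chosen finite-dimensional subspaces, designed so that the sandwiched resolvent $B^{1/2}(A_n-EQ_n)^{-1}B^{1/2}$ coincides with $B^{1/2}(A-E)^{-1}B^{1/2}$ \emph{exactly} for all large $n$, not merely in the limit. Assume first $F\subset D(A)$; fix a basis $\psi_1,\dots,\psi_k$ of $F$ and set $\phi_j:=(A-E)^{-1}\psi_j\in D(A)$. Choose an increasing sequence of finite-dimensional subspaces $H_n\subset D(A)$, each containing $\spann\{\psi_1,\dots,\psi_k,\phi_1,\dots,\phi_k\}$, with $\bigcup_n H_n$ dense in $\calH$. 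Let $P_n$ be orthogonal projection onto $H_n$ and set $A_n:=P_n A P_n$, a finite-rank self-adjoint operator on $\calH$.

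The key observation is that $A_n\phi_j = P_n A\phi_j = A\phi_j = E\phi_j+\psi_j$, because $A\phi_j$ already lies in $H_n$ by construction. After an arbitrarily small perturbation of the auxiliary basis vectors of $H_n$ (those outside $\spann\{\psi_j,\phi_j\}$) if necessary, one can arrange that the self-adjoint matrix $P_n A P_n|_{H_n}$ has neither $0$ nor $E$ as an eigenvalue. Then $\ran(A_n)=H_n$, so $Q_n=P_n$, and $A_n-EQ_n$ is invertible on $H_n$. Consequently $(A_n-EQ_n)\phi_j = A\phi_j - E\phi_j = \psi_j$, which yields both $F\subset\ran(A_n-EQ_n)$ and $(A_n-EQ_n)^{-1}\psi_j = \phi_j = (A-E)^{-1}\psi_j$.

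Since $B\geq 0$ is finite rank with $\ran(B)\subset F$, so is $B^{1/2}$; thus for every $v\in\calH$ the vector $B^{1/2}v$ lies in $F=\spann\{\psi_j\}$, and by linearity $(A_n-EQ_n)^{-1}B^{1/2}v = (A-E)^{-1}B^{1/2}v$. Applying $B^{1/2}$ again gives the exact identity $B^{1/2}(A_n-EQ_n)^{-1}B^{1/2} = B^{1/2}(A-E)^{-1}B^{1/2}$ for all sufficiently large $n$, trivially implying norm convergence uniformly in $B$ with $\ran(B)\subset F$.

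The main obstacle is the genericity argument ensuring that $P_n A P_n|_{H_n}$ avoids $0$ and $E$ in its spectrum, which is handled by small perturbations of the extra basis vectors of $H_n$ within $D(A)$ (the bad set is nowhere dense in the relevant Grassmannian, and $D(A)$ is dense enough to allow the needed perturbations). A secondary issue arises when $F\not\subset D(A)$: then $\psi_j\notin H_n$, and one must first approximate $\psi_j$ in norm by $\psi_j^{(m)}\in D(A)$, execute the above construction with $\psi_j^{(m)}$ in place of $\psi_j$ for each $m$, and recover norm convergence (rather than exact equality) via continuity of $B^{1/2}(A-E)^{-1}B^{1/2}$ as a function of the finite-rank range data of $B$.
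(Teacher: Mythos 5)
Your approach — compressing $A$ directly to finite-dimensional subspaces of $D(A)$ that contain both $F$ and $(A-E)^{-1}F$, and exploiting $P_nA\phi_j = A\phi_j$ to get \emph{exact} equality of the sandwiched resolvents — is genuinely different from the paper's. The paper instead sets $\ti A_n = f_n(A)$ where $f_n$ is a bounded step function, so that $\ti A_n$ is bounded with finite spectrum and $\norm{(\ti A_n - E)^{-1} - (A-E)^{-1}} \to 0$; it then compresses $\ti A_n$ to the cyclic subspace generated by $\ti A_n$ and $F$, which is automatically finite-dimensional and reducing for $\ti A_n$. That route sidesteps the two issues that your proposal runs into.

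The decisive gap is the case $F\not\subset D(A)$, which is not a "secondary issue" — it is generic in the application, since $F$ comes from the ranges of the finite-rank approximations in Lemma~\ref{L2.4} and there is no reason those lie in $D(A)$. In your construction $\ran(A_n) \subset H_n \subset D(A)$, so $\ran(A_n-EQ_n) \subset D(A)$ always; if $F\not\subset D(A)$ the required containment $F\subset\ran(A_n-EQ_n)$ is \emph{impossible}, not merely hard to arrange. The proposed fix of replacing $\psi_j$ by $\psi_j^{(m)}\in D(A)$ produces operators $A_{n,m}$ for which only $F^{(m)}=\spann\{\psi_j^{(m)}\}$ lies in $\ran(A_{n,m}-EQ_{n,m})$, not $F$; and for $B$ with $\ran(B)\subset F$, the operator $(A_{n,m}-EQ_{n,m})^{-1}$ acts only after the projection $Q_{n,m}$, which annihilates the component of $\ran B^{1/2}$ outside $H_{n,m}\subset D(A)$. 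There is no a priori reason that truncation is compatible with norm convergence of $B^{1/2}(A_{n,m}-EQ_{n,m})^{-1}B^{1/2}$, and no continuity estimate is offered to control it. A second, lesser gap is the genericity claim that one can perturb the auxiliary basis vectors so that $0$ and $E$ avoid $\sigma(P_nAP_n|_{H_n})$: this is plausible, but the assertion that the bad set is nowhere dense in the Grassmannian is not argued, and eigenvectors living in the unperturbed block $\spann\{\psi_j,\phi_j\}$ need to be ruled out explicitly. The paper's spectral-truncation construction makes both problems disappear: $\ti A_n$ is everywhere defined, the cyclic subspace automatically contains $F$ and reduces $\ti A_n$, and invertibility of $A_n-EQ_n$ follows from $E\notin f_n(\sigma(A))$ for large $n$.
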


\begin{proof} Define $f_n(x)\colon\bbR\to\bbR$ by
\[
f_n(x) = \begin{cases}
-n & \text{if } x\leq -n \\
n & \text{if } x\geq n \\
\f{1}{n}\, [nx] & \text{if } -n\leq x\leq n
\end{cases}
\]
where $[y]=$ integral part of $y$. Let $\ti A_n=f_n(A)$ so $\norm{(\ti A_n-E)^{-1} -(A-E)^{-1}}\to 0$. Let $Q_n$ be the
projection onto the cyclic subspace generated by $\ti A_n$ and $F$. This cyclic subspace is finite-dimensional, so $A_n =
Q_n\ti A_nQ_n$ is finite rank, and if $\ran(B)\subset F$, $B^{1/2} (\ti A_n-E)^{-1} B^{1/2}=B^{1/2}(A_n-EQ_n)^{-1} B^{1/2}$.
\end{proof}

\begin{proof}[Proof of Proposition~\ref{P2.3}] If $A$, $B_+$, and $B_-$ are operators on a finite-dimensional space, then
\eqref{2.4} is immediate, since both sides equal $\dim[\ran(P_{(-\infty,E)}(A))]-\dim[\ran(P_{(-\infty,E)}(A+B_+ -B_-))]$.
By the last two lemmas, we can find finite-dimensional $A_n$ and $(B_n)_\pm$ so that all $\delta$ objects in \eqref{2.4}
equal the $A,B_\pm$ objects.
\end{proof}

\section{CLR Bounds for Regular Gaps\\in Periodic
Schr\"odinger Operators} \lb{s3}

Let $V_0$ be a periodic, locally $L^{\nu/2}$ function on $\bbR^\nu$ for $\nu\geq 3$, that is,
\begin{equation} \lb{3.1}
V_0 (x+\tau_j) =V_0(x)
\end{equation}
for $\tau_1, \dots, \tau_\nu$ linearly independent in $\bbR^\nu$. Let $H_0=-\Delta+V_0$. Then $H_0$ is a direct integral
of operators, $H_0(k)$, with compact resolvent where $k$ runs through a fundamental cell of the dual lattice (see, e.g.,
\cite{RS4}). Let $\veps_1(k) \leq \veps_2(k) \leq \dots$ be the eigenvalues of $H_0(k)$. Let $(\alpha,\beta)$ be a gap in
$\sigma(H_0)$ in that $(\alpha,\beta)\cap\sigma (H_0)=\emptyset$ but $\alpha,\beta\in\sigma(H_0)$. We say $\beta$
(resp.\ $\alpha)$ is a regular band edge if and only if
\begin{SL}
\item[(i)] $\beta=\inf_k \veps_n(k)$ (resp.\ $\alpha=\sup_k \veps_n(k)$) for a single $n$.
\item[(ii)] $\veps_n(k)=\beta$ (resp.\ $\veps_n(k)=\alpha$) has finitely many solutions $k^{(1)}, \dots, k^{(\ell)}$.
\item[(iii)] At each $k^{(j)}$, $\veps_n(k)$ has a matrix of second derivatives which is strictly positive (resp.\ strictly
negative).
\end{SL}
We say that $(\alpha,\beta)$ is a regular gap if both band edges are regular. It is believed that for a generic $V_0$, all
band edges are regular (for generic results on (i), (ii), see Klopp--Ralston \cite{KR}). Birman \cite{Bir95} has proved
that if $(\alpha,\beta)$ is a regular gap, then with $\norm{\cdot}_{\calI_{\nu/2}^w}$ the weak trace class norm
(see \cite{STI}), one has a constant $c$ so that
\begin{equation} \lb{3.2}
\sup_{\lambda\in (\alpha,\beta)} \bigl\| \abs{W}^{1/2} (H_0-\lambda)^{-1} \abs{W}^{1/2}\bigr\|_{\calI_{\nu/2}^w}
\leq c\norm{W}_{\nu/2}
\end{equation}
By combining this with Theorem~\ref{T1.4}, one immediately has

\begin{theorem}\lb{T3.1} If $(\alpha,\beta)$ is a regular gap of $H_0$, then for any $W\in L^{\nu/2}(\bbR^\nu)$, we have
\begin{equation} \lb{3.3}
N(H_0+W\in (\alpha,\beta)) \leq c \int_{\bbR^\nu} \abs{W(x)}^{\nu/2}\, d^\nu x
\end{equation}
\end{theorem}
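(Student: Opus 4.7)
The plan is to combine Theorem~\ref{T1.4} with Birman's weak--Schatten bound \eqref{3.2} and a standard singular-value inequality. Decompose $W=W_+-W_-$ with $W_\pm=\max(\pm W,0)\geq 0$. Because $\nu\geq 3$ and $W_\pm\in L^{\nu/2}(\bbR^\nu)$, the classical Birman--Solomyak/Sobolev embedding estimates guarantee that $W_\pm$ are relatively form-compact with respect to $H_0$ and are factorizable as multiplication by $W_\pm^{1/2}$, so the hypotheses of Theorem~\ref{T1.4} are satisfied except at the endpoints $\alpha,\beta\in\sigma(H_0)$.

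To handle the endpoints, I would apply Theorem~\ref{T1.4} on $(\alpha+\delta,\beta-\delta)$ for small $\delta>0$, chosen generically so that $\alpha+\delta,\beta-\delta$ avoid the (at most countably many accumulating only at $\alpha,\beta$) discrete eigenvalues of $H_0+W_+$, $H_0-W_-$, and $H_0+W_+-W_-$ in the gap. This gives
\[
N\bigl(H_0+W\in(\alpha+\delta,\beta-\delta)\bigr) \leq N\bigl(W_+^{1/2}(H_0-\alpha-\delta)^{-1}W_+^{1/2}<-1\bigr)+N\bigl(W_-^{1/2}(H_0-\beta+\delta)^{-1}W_-^{1/2}>1\bigr).
\]
Both terms on the right have the form $N(|T|>1)$ for a self-adjoint compact $T$, and the singular-value ordering $\mu_n(T)\leq\norm{T}_{\calI_p^w}n^{-1/p}$ yields the elementary inequality $N(|T|>1)\leq\norm{T}_{\calI_p^w}^p$ for any $p>0$.

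Now I would invoke Birman's estimate \eqref{3.2} applied to $W_\pm$ in place of $W$ (legitimate since $W_\pm\geq 0$ and $\norm{W_\pm}_{\nu/2}\leq\norm{W}_{\nu/2}$), which supplies
\[
\bigl\|W_\pm^{1/2}(H_0-\lambda)^{-1}W_\pm^{1/2}\bigr\|_{\calI_{\nu/2}^w}\leq c\,\norm{W}_{\nu/2}
\]
uniformly for $\lambda\in(\alpha,\beta)$. Taking the $\nu/2$-th power, summing the two contributions, and letting $\delta\downarrow 0$ (the left-hand counting function increases monotonically to $N(H_0+W\in(\alpha,\beta))$) yields \eqref{3.3} with constant $2c^{\nu/2}$.

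The main obstacle is really only bookkeeping: the genuinely substantive inputs—the cross-term-free decoupling of Theorem~\ref{T1.4} and the weak--trace-class resolvent estimate \eqref{3.2}—are already in hand, and everything else (relative form compactness, the elementary singular-value bound, and the limit $\delta\downarrow 0$) is standard. The only subtle point is the need to invoke the \emph{supremum} in \eqref{3.2}, since the two gap endpoints are shifted independently in \eqref{1.21}; this is precisely what makes the right-hand side $\delta$-uniform and allows the final limit.
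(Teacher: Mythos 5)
Your proposal is correct and follows exactly the route the paper intends: the paper's proof of Theorem~\ref{T3.1} is the single sentence ``By combining this with Theorem~\ref{T1.4}, one immediately has,'' and you have supplied precisely the bookkeeping being elided --- the $\delta$-retraction to apply Theorem~\ref{T1.4} with gap endpoints off the relevant spectra, the elementary counting bound $N(\abs{T}>1)\leq\norm{T}_{\calI_{\nu/2}^w}^{\nu/2}$, the application of \eqref{3.2} to $W_\pm$ via $\norm{W_\pm}_{\nu/2}\leq\norm{W}_{\nu/2}$, and the monotone limit $\delta\downarrow 0$. Your observation that the $\sup$ over $\lambda\in(\alpha,\beta)$ in \eqref{3.2} is what gives the $\delta$-uniformity needed for the limit is exactly the right point to flag, and it is the genuine content hidden behind the paper's ``immediately.''
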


Because he didn't have Theorem~\ref{T1.4}, Birman restricted himself to perturbations of a definite sign.

Obviously, if there are finitely many gaps, one can sum over all gaps if they were all regular. It is known (see
Sobolev \cite{Sob3} and references therein) that if $V_0$ is smooth, then there are always only finitely many gaps.

\section{An Abstract Critical Lieb--Thirring Bound} \lb{s4}

In this section, we'll prove the following continuum critical Lieb--Thirring bound and discrete analog:

\begin{theorem}\lb{T4.1} Let $H_0$ be a semibounded selfadjoint operator on $L^2(\bbR,dx)$ so that for some $a<b$,
\begin{SL}
\item[{\rm{(i)}}]
\begin{equation} \lb{4.1}
[a,b) \cap \sigma(H_0)=\emptyset
\end{equation}

\item[{\rm{(ii)}}] For $E_0 <\inf\sigma(H_0)$, $(H_0-E_0)^{-1/2}$ is a bounded operator from $L^2$ to $L^\infty$.

\item[{\rm{(iii)}}] There exist $\veps,\delta >0$ and continuous functions $\rho,\theta,E$ from $(-\delta,\delta)$ to $\bbR$
and $u(\dott,\dott)$ from $\bbR\times(-\delta,\delta)$ to $\bbC$ so that any $\varphi\in \ran(P_{[b,b+\veps)}(H_0))$
has an expansion
\begin{equation} \lb{4.2}
\varphi(x) = \int_{-\delta}^\delta \wti\varphi (k) u(x,k)\, dk
\end{equation}
with
\begin{equation} \lb{4.3}
\wti{H_0\varphi}(k) =E(k)\, \wti\varphi(k)
\end{equation}
and
\begin{equation} \lb{4.4}
\norm{\varphi}_{L^2(\bbR,dx)}^2 = \int \abs{\wti\varphi(k)}^2 \rho(k)\, dk
\end{equation}
Moreover, for any $\wti\varphi\in L^2(-\delta,\delta;dk)$, \eqref{4.2} defines a function in $L^2(\bbR)$ lying in
$\ran(P_{[b,b+\veps)}(H_0))$ {\rm{(}}the integral converges by the hypothesis \eqref{4.7} below{\rm{)}}.

\item[{\rm{(iv)}}]
\begin{equation} \lb{4.5}
0<\inf_{k\in (-\delta,\delta)} \rho(k) = \rho_- <\sup_{k\in (-\delta,\delta)} \rho(k) = \rho_+ <\infty
\end{equation}

\item[{\rm{(v)}}] $E(k)=E(-k)$ and maps $[0,\delta)$ bijectively onto $[0,\veps)$. For some $c_1>0$, we have
\begin{equation} \lb{4.6}
E(k)\geq b+ c_1 k^2
\end{equation}

\item[{\rm{(vi)}}]
\begin{equation} \lb{4.7}
\sup_{\substack{k\in (-\delta,\delta) \\ x\in\bbR}} \abs{u(x,k)} = c_2 <\infty
\end{equation}

\item[{\rm{(vii)}}] If
\begin{equation} \lb{4.8}
v(x,k) = e^{-i\theta(k)x} u(x,k)
\end{equation}
then for some $c_3<\infty$ and all $x\in\bbR$,
\begin{equation} \lb{4.9}
\abs{v(x,k)-v(x,0)} \leq c_3 k^2
\end{equation}

\item[{\rm{(viii)}}] $\theta$ is $C^2$ on $(-\delta,\delta)$ and
\begin{equation} \lb{4.10}
\inf_{k\in(-\delta,\delta)} \theta'(k) >0
\end{equation}

\item[{\rm{(ix)}}]
\begin{equation} \lb{4.11a}
E(-k) = E(k), \quad u(x,-k) = \ol{u(x,k)}, \quad \theta(-k) = -\theta(k), \quad \rho(-k)=\rho(k)
\end{equation}
\end{SL}

Then for some $C$ and all $V\in L^1 (\bbR,dx)$, we have
\begin{equation} \lb{4.11}
\sum_{\substack{e\in\sigma_\di(H_0+V) \\ e\in (a,b)}} (b-e)^{1/2} \leq C \int \abs{V(x)}\ dx
\end{equation}
\end{theorem}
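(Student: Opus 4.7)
The plan is to reduce to the one--dimensional critical Lieb--Thirring inequality of Weidl \cite{Weidl} and Hundertmark--Lieb--Thomas \cite{HLT} for the free Laplacian on $L^2(\bbR)$. By the Aizenman--Lieb identity
\[
\sum_{\substack{e\in\sigma_\di(H_0+V)\\e\in(a,b)}} (b-e)^{1/2} = \tfrac12 \int_0^{b-a} \eta^{-1/2}\, N(H_0+V\in(a,b-\eta))\, d\eta,
\]
and Theorem~\ref{T1.4} applied with $\alpha=a$, $\beta=b-\eta$, $B_\pm=V_\pm$, the problem reduces to bounding $\int_0^{b-a} \eta^{-1/2} N_\eta\,d\eta$ plus the $\eta$--independent term $N_1\cdot 2(b-a)^{1/2}$, where
\[
N_\eta := N\!\left(V_-^{1/2}(H_0-(b-\eta))^{-1}V_-^{1/2}>1\right),\qquad N_1 := N(V_+^{1/2}(H_0-a)^{-1}V_+^{1/2}<-1).
\]

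The heart of the proof is to transfer $N_\eta$ to the Birman--Schwinger operator for the free Laplacian. Let $P=P_{[b,b+\veps)}(H_0)$; on $(1-P)\calH$ the resolvent is uniformly norm--bounded for $\eta\in(0,\veps)$, so by Corollary~\ref{C2.2} this part only shifts $N_\eta$ by an $\eta$--independent finite amount. On $P\calH$, the expansion (iii) diagonalizes $H_0$ as multiplication by $E(k)$. Changing variables $y=\theta(k)$ (smooth and strictly monotone by (viii), odd by (ix)), and expanding via (v) and (vii), one finds $E(\theta^{-1}(y))-(b-\eta)=\tilde c_1 y^2+\eta+O(y^4)$ and $u(x,\theta^{-1}(y))=e^{iyx}\bigl(v(x,0)+O(y^2)\bigr)$ uniformly in $x$. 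Applying Corollary~\ref{C2.2} again to peel off the error terms, and using $|v(x,0)|\le c_2$ from (vi) together with the identity $\sigma(CC^*)\setminus\{0\}=\sigma(C^*C)\setminus\{0\}$, yields
\[
N_\eta\le N\!\left(c_2^2\,|V|^{1/2}(-\tilde c_1\partial_y^2+\eta)^{-1}|V|^{1/2} > \tfrac12\right) + (\text{$\eta$--independent terms}).
\]
Integrating $\eta^{-1/2}$ against the main term and invoking Hundertmark--Lieb--Thomas for the free Laplacian on $L^2(\bbR)$ with potential $c_2^2|V|/2$ (after running the Aizenman--Lieb identity in reverse on the comparison side) bounds it by $C\int|V|\,dx$.

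The $\eta$--independent remainders (including $N_1$ and the error operators from the transfer), each multiplied by the finite constant $2(b-a)^{1/2}$, have the schematic form $N(B^{1/2}KB^{1/2}<\pm1)$ with $B\in\{V_+,V_-\}$ and $K$ bounded self--adjoint. Hypothesis (ii) implies that for $E$ in the gap $[a,b)$, $(H_0-E)^{-1}$ has a bounded integral kernel, so a Hilbert--Schmidt estimate gives $\|B^{1/2}KB^{1/2}\|_1\le C\int|V|$, and the eigenvalue count is dominated by the trace norm. Hence these contributions are also $\le C'\int|V|\,dx$.

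The main obstacle is the transfer step: converting an approximate operator identity with $O(y^2)$ errors into a \emph{one--sided} inequality on the eigenvalue count $N_\eta$ uniform in $\eta\to 0^+$. The symmetric form of Corollary~\ref{C2.2} is exactly the right tool, organizing the error as a compact perturbation of $\eta$--independent rank that adds only a finite constant to the count. Symmetry (ix) and the quadratic bound (v) ensure that both halves of the band--edge parameter interval $\{k<0\}$ and $\{k>0\}$ transfer consistently under $y=\theta(k)$ to a single symmetric Fourier multiplier on $L^2(\bbR)$, so the final reduction to Hundertmark--Lieb--Thomas is clean.
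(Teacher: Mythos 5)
Your overall strategy matches the paper's: layer--cake (Aizenman--Lieb) reduction via \eqref{4.14}, decoupling via Theorem~\ref{T1.4} to peel off $V_+$, splitting the resolvent with $P=P_{[b,b+\veps)}(H_0)$ and using Lemma~\ref{L4.3}-type trace-class bounds for the $(1-P)$ piece, eigenfunction expansion on $P\calH$ with change of variables $\theta(k)\mapsto y$, the $u = e^{i\theta(k)x}v(x,0) + \text{error}$ splitting handled by Corollary~\ref{C2.2}, and a final reduction to the Hundertmark--Lieb--Thomas bound for $-d^2/dx^2$.

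There is, however, a genuine gap in how you treat the energy denominator. You write $E(\theta^{-1}(y))-(b-\eta)=\tilde c_1 y^2+\eta+O(y^4)$ and plan to ``peel off the error terms'' with Corollary~\ref{C2.2}. Two problems. First, this two-sided Taylor expansion is not available: hypothesis (v) only gives $E$ continuous with the \emph{one-sided} bound $E(k)\ge b+c_1 k^2$; there is no $C^2$ (let alone $C^4$) smoothness of $E$ assumed, so the $O(y^4)$ error is fictitious. Second, even granting such an expansion, the error sits in the \emph{denominator}, so it does not enter as an additive perturbation $S+T$ of the operator; Corollary~\ref{C2.2} only lets you split off additive pieces (as the paper does for the $u$-expansion), not multiplicative or denominator errors. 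The paper sidesteps this entirely: after peeling off the $T_\tau$ piece, the remaining kernel \eqref{4.23} has the form $\int \f{v(x,0)\ol{v(y,0)}e^{i\theta(x-y)}}{E(Q(\theta))-b+\tau}\,d\mu(\theta)$, and since for each fixed $\theta$ the kernel $v(x,0)\ol{v(y,0)}e^{i\theta(x-y)}$ is a positive semidefinite rank-one kernel, one may \emph{lower the denominator} pointwise (using $E(Q(\theta))-b+\tau\ge c_4\theta^2+\tau$) and \emph{extend the $\theta$-integral to all of $\bbR$} to obtain an operator that dominates $S_\tau^*S_\tau$ in the operator ordering, namely $c_5\, v(\cdot,0)\bigl(-c_4\f{d^2}{dx^2}+\tau\bigr)^{-1}\ol{v(\cdot,0)}$. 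No smoothness of $E$ and no error control are needed. You should replace the Taylor-plus-$\calC_{2.2}$ step for the denominator by this operator-domination argument; once that is done the rest of your proof goes through and matches the paper's.
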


\begin{remarks} 1. There is a similar result for $(b,a]\cap\sigma(H_0)=\emptyset$ with \eqref{4.6} replaced by
\begin{equation} \lb{4.12}
E(k)\leq b-c_1 k^2
\end{equation}
This means we can control full gaps $(b_-,b_+)$ in $\sigma(H_0)$. To control $(-\infty,\inf\sigma(H_0))$ (and the top
half in the discrete case) will require an additional argument that we provide at the end of this section.

\smallskip
2. We could replace $\theta(k)$ by $k$ (and we'll essentially do that). We haven't because, in the finite gap case,
there is a natural parameter distinct from $\theta$.

\smallskip
3. The idea behind the proof will be to use decoupling to reduce the proof to control of the $[b,b+\veps)$ region
and use the eigenfunction expansion there to compare to $-\f{d^2}{dx^2} + \wti V(x)$, where $\wti V$ and $V$ have
comparable $L^1$ norms.

\smallskip
4. Hypothesis (ii) implies that any $V\in L^1$ is a relatively compact perturbation of $H_0$.

\smallskip
5. The decomposition we use in the proof below was suggested to us by a paper of Sobolev \cite{Sob1}, who used it
in a related, albeit distinct, context.

\smallskip
6. \eqref{4.2} and \eqref{4.3} imply for all $\wti\varphi \in L^2 ((-\delta,\delta),dk)$ and all $\psi\in
\ran(P_{[b,b+\veps)}(H_0))$, we have that
\begin{align*}
\jap{\psi,\varphi} &= \int_{-\delta}^\delta dk \int dx \, \wti\varphi(x)\, \ol{\psi(x)}\, u(x,k) \\
&= \int \ol{\wti{\psi}(k)}\, \wti\varphi(k)\rho(k)\, dk
\end{align*}
which implies that
\begin{equation} \lb{4.12a}
\wti\psi(k) = \rho(k)^{-1} \int dx\, \ol{u(x,k)}\, \psi(x)
\end{equation}
\end{remarks}

We'll prove \eqref{4.11} by reducing it to a bound on $N(H_0+V\in [a,b-\tau])$:

\begin{lemma} \lb{L4.2} If we  have $C_1,C_2,C_3$ so that for $0<\tau < b-a$,
\begin{equation} \lb{4.13}
N(H_0 +V\in [a,b-\tau]) \leq C_1 \int \abs{V(x)}\, dx
+ N\biggl( -\f{d^2}{dx^2} -C_2 V_- \leq -\f{\tau}{C_3}\biggr)
\end{equation}
then \eqref{4.11} holds.
\end{lemma}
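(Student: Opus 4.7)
The strategy is to use a layer-cake (Cavalieri) representation to turn the sum in \eqref{4.11} into an integral of counting functions, apply the hypothesis \eqref{4.13} pointwise in $\tau$, and reduce what remains to the one-dimensional critical Lieb--Thirring bound of Weidl \cite{Weidl}.

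First, I would write, for any $e\in(a,b)$,
\[
(b-e)^{1/2} = \f{1}{2}\int_0^{b-a} \tau^{-1/2}\, \bdone_{\{\tau < b-e\}}\, d\tau,
\]
so that summing over the eigenvalues of $H_0+V$ in $(a,b)$ and interchanging sum and integral (justified by positivity) gives
\[
\sum_{\substack{e\in\sigma_\di(H_0+V)\\e\in(a,b)}} (b-e)^{1/2} = \f{1}{2}\int_0^{b-a}\tau^{-1/2}\, N(H_0+V\in[a,b-\tau])\, d\tau,
\]
since $\tau<b-e$ with $e\in(a,b)$ is equivalent to $e\in[a,b-\tau]$. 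Inserting \eqref{4.13}, the $C_1\int\abs{V}$ contribution integrates to $2C_1\sqrt{b-a}\,\norm{V}_{L^1}$ because $\int_0^{b-a}\tau^{-1/2}\, d\tau$ is finite. For the second contribution, the change of variables $\sigma=\tau/C_3$ together with extending the upper limit to $+\infty$ (permissible by non-negativity of the integrand) produces a constant times
\[
\int_0^\infty\sigma^{-1/2}\, N\bigl(-\tfrac{d^2}{dx^2}-C_2V_-\le-\sigma\bigr)\, d\sigma.
\]
Running the layer-cake identity in reverse identifies this last integral with $2\sum_j E_j^{1/2}$, where the $-E_j<0$ are the negative eigenvalues of $-\f{d^2}{dx^2}-C_2V_-$.

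At this stage, Weidl's critical one-dimensional Lieb--Thirring inequality (with optimal constant by Hundertmark--Lieb--Thomas \cite{HLT}) bounds $\sum_jE_j^{1/2}$ by a constant times $\int C_2V_-\le C_2\norm{V}_{L^1}$, closing the argument. There is no serious obstacle here: the computation is just a two-fold application of the layer-cake identity sandwiching one invocation of \eqref{4.13} and one invocation of the critical 1D Lieb--Thirring bound. The only points to verify are the integrability of $\tau^{-1/2}$ at $\tau=0$ and the legitimacy of interchanging sum and integral, both of which are immediate from positivity.
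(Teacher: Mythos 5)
Your proof is correct and takes essentially the same route as the paper's: your layer-cake representation for $(b-e)^{1/2}$ is exactly the paper's integration-by-parts identity \eqref{4.14} specialized to $f(y)=(b-y)^{1/2}$, and the reduction via the $\sigma=\tau/C_3$ change of variables to Weidl's critical Lieb--Thirring bound is identical. (Minor slip: the first contribution integrates to $C_1\sqrt{b-a}\,\norm{V}_{L^1}$, not $2C_1\sqrt{b-a}\,\norm{V}_{L^1}$, since you must keep the overall $\tfrac12$; this is of course immaterial to the conclusion.)
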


\begin{remark} For control of a lower band edge, $V_-$ in the last term will be replaced by $V_+$.
\end{remark}

\begin{proof} For any absolutely continuous function, $f$, on $[a,b]$ with $f(b)=0$,
\begin{equation} \lb{4.14}
\sum_{\substack{e\in\sigma_\di (H_0+V) \\ e\in [a,b]}} f(e) = -\int_0^{b-a} f'(b-\tau)
N(H_0+V\in [a,b-\tau])\, d\tau
\end{equation}
so, by \eqref{4.13} with $f(y)=(b-y)^{1/2}$,
\begin{align*}
& \text{LHS of \eqref{4.11}} \\
& \quad \leq \int_0^{b-a} \tfrac12\, \tau^{-1/2} \biggl[ C_1 \norm{V}_1 + N\bigg( -\f{d^2}{dx^2}
- C_2 V_- \leq  -\f{\tau}{C_3}\biggr)\biggr] \, d\tau \\
& \quad = \bigl(\sqrt{b-a}\bigr) C_1 \norm{V}_1 + \sqrt{C_3} \int_0^{(b-a)/C_3} \tfrac12\, \sigma^{-1/2}
N\biggl( -\f{d^2}{dx^2} - C_2 V_- \leq -\sigma\biggr) d\sigma \\
& \quad \leq \bigl( \sqrt{b-a}\bigr) C_1 \norm{V_1}+ \sqrt{C_3} \sum_{\substack{e < 0 \\
e\in\sigma (-\f{d^2}{dx^2} - C_2 V_-)}} (-e)^{1/2} \\
& \quad \leq \bigl( \sqrt{b-a}\, C_1 + C_2 \sqrt{C_3}\, L_{\f12,1}\bigr) \norm{V}_1
\end{align*}
proving \eqref{4.11}. (It is known that $L_{\f12,1}=\f12$ \cite{HLT}.)
\end{proof}

\begin{lemma}\lb{L4.3} Suppose $E_0 <\inf\sigma (H_0)$ and $(H_0-E_0)^{-1/2}$ is a bounded operator from $L^2$ to
$L^\infty$. Let $f(x)$ be a function on $\sigma(H_0)$ with
\begin{equation} \lb{4.15}
D=\sup_{y\in\sigma(H_0)} \abs{f(y)}(y-E_0)<\infty
\end{equation}
Then for any $V\in L^1$,  $\abs{V}^{1/2} f(H_0) \abs{V}^{1/2}$ is trace class and
\begin{equation} \lb{4.16}
\norm{\abs{V}^{1/2} f(H_0) \abs{V}^{1/2}}_1 \leq D \norm{(H-E_0)^{-1/2}}_{2,\infty}^2 \norm{V}_1
\end{equation}
{\rm{(}}where the $\norm{\cdot}_1$ on the left is trace class norm and on the right is $L^1(\bbR)$ norm{\rm{)}}.
\end{lemma}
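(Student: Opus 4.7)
The plan is to absorb the two powers of $(y-E_0)^{-1/2}$ into $f$ so that what remains in the middle is a bounded function of $H_0$, and then show that the flanking operators $|V|^{1/2}(H_0-E_0)^{-1/2}$ are Hilbert--Schmidt with the desired bound. Concretely, define $g(y) = (y-E_0) f(y)$ on $\sigma(H_0)$, so the hypothesis \eqref{4.15} says $\|g\|_\infty \leq D$, hence by the spectral theorem $g(H_0)$ is bounded with $\|g(H_0)\| \leq D$. By the functional calculus,
\[
|V|^{1/2} f(H_0) |V|^{1/2} = A^* \, g(H_0) \, A, \qquad A := (H_0-E_0)^{-1/2} |V|^{1/2},
\]
and then the standard inequality $\|A^* C A\|_1 \leq \|C\|\,\|A\|_2^2$ reduces the problem to estimating $\|A\|_2$.

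The Hilbert--Schmidt bound on $A$ is where the $L^2\to L^\infty$ hypothesis enters. Since $T := (H_0-E_0)^{-1/2}$ is bounded from $L^2$ to $L^\infty$, for each $x$ (a.e.) the linear functional $\varphi \mapsto (T\varphi)(x)$ is bounded on $L^2$ with norm $\leq M := \|T\|_{2,\infty}$, and Riesz representation supplies an integral kernel $K(x,\cdot)\in L^2$ with $\|K(x,\cdot)\|_{L^2} \leq M$ uniformly in $x$. Because $T$ is selfadjoint, $K(x,y)=\overline{K(y,x)}$ a.e., so $\|K(\cdot,y)\|_{L^2} \leq M$ as well. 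Then $A$ has integral kernel $K(x,y)|V(y)|^{1/2}$, whence
\[
\|A\|_2^2 \;=\; \iint |K(x,y)|^2 |V(y)|\, dx\, dy \;=\; \int |V(y)|\,\|K(\cdot,y)\|_{L^2}^2\, dy \;\leq\; M^2 \|V\|_1.
\]

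Chaining the estimates gives $\||V|^{1/2} f(H_0) |V|^{1/2}\|_1 \leq D\,M^2 \,\|V\|_1$, which is precisely \eqref{4.16}, and in particular trace class. The main obstacle is the (minor) technical point of producing the integral kernel $K$ and verifying its symmetry: this is the bridge between the abstract $L^2\to L^\infty$ hypothesis (ii) and the concrete double-integral computation of the Hilbert--Schmidt norm. Everything else is bookkeeping with the functional calculus and Hölder-type inequalities for Schatten classes.
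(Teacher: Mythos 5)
Your proof is correct and follows essentially the same route as the paper: both invoke the Dunford--Pettis theorem to produce a Hermitian symmetric kernel $K$ for $(H_0-E_0)^{-1/2}$, use the kernel symmetry to show $(H_0-E_0)^{-1/2}|V|^{1/2}$ is Hilbert--Schmidt with norm $\leq \|(H_0-E_0)^{-1/2}\|_{2,\infty}\|V\|_1^{1/2}$, and note that $D$ is the operator norm of $(H_0-E_0)f(H_0)$. The only difference is that you spell out the factorization $|V|^{1/2}f(H_0)|V|^{1/2}=A^*g(H_0)A$ and the Schatten--H\"older step explicitly, while the paper leaves these as implicit.
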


\begin{proof} By the Dunford--Pettis theorem (\cite{Tre}), $(H_0-E_0)^{-1/2}$ has a Hermitian symmetric integral kernel
$K(x,y)$ with
\[
\sup_x \biggl( \int \abs{K(x,y)}^2\, dy\biggr)^{1/2} = \norm{(H-E_0)^{-1/2}}_{2,\infty}
\]
so, by the symmetry, $(H-E_0)^{-1/2} \abs{V}^{1/2}$ is Hilbert--Schmidt with Hilbert--Schmidt norm bounded by
$\norm{(H-E_0)^{-1/2}}_{2,\infty} \norm{V}_1^{1/2}$. Since $D$ is the operator norm of $(H_0-E_0)f(H_0)$, \eqref{4.16}
is immediate.
\end{proof}

\begin{proof}[Proof of Theorem~\ref{T4.1}] We use \eqref{1.21} with $A=H_0$, $B=V$\!, $\alpha=a$, $\beta=b-\tau$, 
where $\tau$ is any point in $(0,b-a)$, and Lemma~\ref{L4.3} to see
\begin{equation} \lb{4.17}
\text{LHS of \eqref{4.13}} \leq N(V_-^{1/2} (H_0-b+\tau)^{-1} V_-^{1/2} >1) + C\int \abs{V_+(x)}\, dx
\end{equation}
for a suitable constant.

In the first term of \eqref{4.17}, we insert $P_{[b,b+\veps]}(H_0) + (1-P_{[b,b+\veps]}(H_0))$ in $(H_0-b+\tau)^{-1}$, use
\eqref{2.1} with $c=d=\f12$ and use Lemma~\ref{L4.3} to get
\begin{equation} \lb{4.18}
\begin{split}
N(V_-^{1/2} & (H_0-b+\tau)^{-1} V_-^{1/2} >1) \\
&\leq C \int \abs{V_-(x)}\, dx
+ N(V_-^{1/2} (H_0-b+\tau)^{-1} P_{[b,b+\veps]} (H_0) V_-^{1/2} > \tfrac12)
\end{split}
\end{equation}

By \eqref{4.2}--\eqref{4.4} and \eqref{4.12a}, for $\lambda\equiv b-\tau\notin\sigma(H_0)$, $(H_0-\lambda)^{-1}
P_{[b,b+\veps)} (H_0)$ has the integral kernel
\begin{equation} \lb{4.19}
\int_{-\delta}^\delta \f{u(x,k)\, \ol{u(y,k)}}{E(k)-b+\tau}\, \f{dk}{\rho(k)}
\end{equation}
Write
\begin{equation} \lb{4.20}
u(x,k) = e^{i\theta(k)x} v(x,0) + e^{i\theta (k)x}[v(x,k) - v(x,0)]
\end{equation}
and insert into \eqref{4.19}, writing the kernel as $(S_\tau + T_\tau)^* (S_\tau + T_\tau)$ and use \eqref{2.2}, where
$S,T$ have integral kernels
\begin{equation} \lb{4.21}
S_\tau (k,x) = (E(k)-b+\tau)^{-1/2} \rho(k)^{-1/2} e^{i\theta(k)x} v(x,0)
\end{equation}
and similarly for $T$.

By \eqref{4.5}, \eqref{4.6}, and \eqref{4.9}, uniformly in $k,x$ and $\tau$, $\abs{T_\tau(k,x)}$ is bounded, so
$T_\tau V_-^{1/2}$ is bounded uniformly in $\tau$ in Hilbert--Schmidt norm as a map from $L^2 (\bbR,dx)$ to $L^2
([b,b+\veps),dk)$. Thus, uniformly in $\tau$,
\begin{equation} \lb{4.22}
N(V_-^{1/2}T_\tau^* T_\tau V_-^{1/2}> \tfrac18) \leq C\int \abs{V_-(x)}\, dx
\end{equation}

Let $Q(\theta)$ be an inverse function to $\theta$. Changing variables from $k$ to $\theta$, $S_\tau^* S_\tau$ has
integral kernel
\begin{equation} \lb{4.23}
\int_{-\theta(\delta)}^{\theta(\delta)} \f{v(x,0)\, \ol{v(y,0)}\, e^{i\theta(x-y)}}{E(Q(\theta))-b+\tau}\,
\f{d\theta}{\theta'(Q(\theta)) \rho(Q(\theta))}
\end{equation}
By \eqref{4.10} and \eqref{4.6}, there is a constant $c_4$ with $E(Q(\theta)) -b+\tau \geq c_4 \theta^2 + \tau$. Also,
$u\bar u$ is a positive definite kernel, so the operator in \eqref{4.23} is dominated in operator sense by the kernel
\begin{equation} \lb{4.24}
c_5 \int_{-\infty}^\infty \f{v(x,0)\, \ol{v(y,0)}\, e^{i\theta(x-y)}}{c_4 \theta^2 + \tau} \, d\theta
\end{equation}
which is the integral kernel of $c_5 v(\dott,0) (-c_4 \f{d^2}{dx^2} + \tau)^{-1}\, \ol{v(\dott,0)}$. Thus,
\begin{align}
N(V_-^{1/2} & S_\tau^* S_\tau  V_-^{1/2} > \tfrac18) \notag \\
& = N\biggl(8c_5 v(\dott,0) V_-^{1/2} \biggl( -c_4\, \f{d^2}{dx^2} + \tau\biggr)^{-1} v(\dott,0)
V_-^{1/2} > 1\biggr) \notag \\
&= N\biggl( -\f{d^2}{dx^2} -\f{8c_5}{c_4}\, \abs{v(\dott, 0)}^2 V_- < - \f{\tau}{c_3}\biggr) \lb{4.25}
\end{align}
by the Birman--Schwinger principle.

Letting $C_2 = \f{8c_5}{c_4} \sup_x \abs{v(\dott,0)}^2$, we see that \eqref{4.13}, and so \eqref{4.11}, holds.
\end{proof}

Next, we turn to the analog for Jacobi matrices. $J_0$ is a fixed two-sided Jacobi matrix and $\delta J_0$ a
Jacobi perturbation with parameters $\{a_n^{(0)}, b_n^{(0)}\}_{n=-\infty}^\infty$ and $\{\delta a_n,
\delta b_n\}_{n=-\infty}^\infty$, respectively. $J=J_0+\delta J$ with parameters $\{a_n, b_n\}_{n=-\infty}^\infty$.

\begin{theorem} \lb{T4.4} Let $J_0$ be a Jacobi matrix on $\ell^2(\bbZ)$ so that for some $a<b$:
\begin{SL}
\item[{\rm{(i)}}]
\begin{equation} \lb{4.29}
[a,b) \cap\sigma(J_0)=\emptyset
\end{equation}

\item[{\rm{(ii)}}] There exist $\veps,\delta >0$ and functions $\rho,\theta,E$ from $(-\delta,\delta)$ to $\bbR$ and
$u_\bddot(\cdot)$ from $\bbZ\times (-\delta,\delta)$ to $\bbC$ so that any $\varphi\in\ran(P_{[b,b+\veps)}(J_0))$ has
an expansion
\begin{equation} \lb{4.30}
\varphi_n = \int_{-\delta}^\delta \wti\varphi(k) u_n(k)\, dk
\end{equation}
with
\begin{equation} \lb{4.31}
\wti{J_0 \varphi}(k) = E(k) \wti\varphi(k)
\end{equation}
and
\begin{equation} \lb{4.32}
\norm{\varphi}_{\ell^2(\bbZ)}^2 = \int \abs{\wti\varphi(k)}^2 \rho(k)\, dk
\end{equation}
Moreover, for any $\wti\varphi\in L^2 ((-\delta,\delta),dk)$, \eqref{4.31} defines a $\varphi\in\ran(P_{[b,b+\veps)}(J_0))$.

\item[{\rm{(iii)}}]
\begin{equation} \lb{4.33}
0 < \inf_{k\in (-\delta,\delta)} \rho(k) = \rho_- < \sup_{k\in (-\delta,\delta)} \rho(k) = \rho_+ <\infty
\end{equation}

\item[{\rm{(iv)}}] $E(k)=E(-k)$ and maps $[0,\delta)$ to $[0,\veps)$. For some $c_1 >0$, we have
\begin{equation} \lb{4.34}
E(k) \geq b+c_1 k^2
\end{equation}

\item[{\rm{(v)}}]
\begin{equation} \lb{4.35}
\sup_{\substack{k\in (-\delta,\delta) \\ n\in\bbZ}} \abs{u_n (k)} = c_2 <\infty
\end{equation}

\item[{\rm{(vi)}}] If
\begin{equation} \lb{4.36}
v_n(k) = e^{-i\theta(k)n} u_n(k)
\end{equation}
then for some $c_3 <\infty$ and all $n\in\bbZ$,
\begin{equation} \lb{4.37}
\abs{v_n(k) - v_n(0)} \leq c_3 k^2
\end{equation}

\item[{\rm{(vii)}}] $\theta$ is $C^2$ on $(-\delta,\delta)$ and
\begin{equation} \lb{4.38}
\inf_{x\in (-\delta,\delta)} \theta'(k) >0
\end{equation}

\item[{\rm{(viii)}}]
\begin{equation} \lb{4.39}
E(-k) = E(k), \quad u_n (-k) = \ol{u_n(k)}, \quad \theta(-k) =-\theta(k), \quad \rho(-k)=\rho(k)
\end{equation}
\end{SL}

Then for some $C$ and all $\delta J$, we have
\begin{equation} \lb{4.40}
\sum_{\substack{e\in\sigma_\di (J_0 + \delta J) \\ e\in (a,b)}} (b-e)^{1/2} \leq C
\sum_{n=-\infty}^\infty \, \abs{\delta a_n} + \abs{\delta b_n}
\end{equation}
\end{theorem}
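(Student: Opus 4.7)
The plan is to follow the structure of the proof of Theorem~\ref{T4.1}, modifying two steps: (i) the tridiagonal $\delta J$ must first be bounded by a diagonal dominator before applying the decoupling Theorem~\ref{T1.4}, and (ii) the continuous critical Lieb--Thirring bound used at the end is replaced by the discrete analog due to Hundertmark--Simon \cite{HS}. Let $D$ be the diagonal operator on $\ell^2(\bbZ)$ with entries $D_{nn}=|\delta b_n|+|\delta a_n|+|\delta a_{n-1}|$; then $\pm\delta J\leq D$ as forms (using $2|\Real(\overline{\phi_n}\phi_{n+1})|\leq|\phi_n|^2+|\phi_{n+1}|^2$), $\tr D\leq 2\sum_n(|\delta a_n|+|\delta b_n|)$, and setting $B_\pm=\tfrac12(D\pm\delta J)\geq 0$, one has $\delta J=B_+-B_-$, each $B_\pm\leq D$ (since $B_++B_-=D$), and each trace class (hence factorizable and form-compact).

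Apply Theorem~\ref{T1.4} with $\alpha=a$, $\beta=b-\tau$ to decouple $N(J_0+\delta J\in(a,b-\tau))$ into two Birman--Schwinger counts. The first term (involving $(J_0-a)^{-1}$) and the $(I-P_{[b,b+\veps)}(J_0))$-part of the second term are bounded by $C\tr D\leq C'\sum_n(|\delta a_n|+|\delta b_n|)$ via the trace estimate $\tr(B_\pm^{1/2}RB_\pm^{1/2})\leq\|R\|\cdot\|B_\pm\|_1$, since $J_0$ is bounded and the relevant resolvents are bounded operators uniformly in small $\tau$. For the remaining $(J_0-b+\tau)^{-1}P_{[b,b+\veps)}(J_0)$-piece, insert the expansion \eqref{4.30}, write $u_n(k)=e^{i\theta(k)n}v_n(0)+e^{i\theta(k)n}(v_n(k)-v_n(0))$, and factor the kernel as $(S_\tau+T_\tau)^*(S_\tau+T_\tau)$ as in the continuous proof. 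The $T_\tau$ contribution is controlled by $\tr(B_-F_tF_t^*)\leq\tr(DF_tF_t^*)=\sum_n D_{nn}(F_tF_t^*)(n,n)\leq C\tr D$, where the diagonal entries $(F_tF_t^*)(n,n)$ are bounded uniformly in $n$ and $\tau$ because $|v_n(k)-v_n(0)|\leq c_3 k^2$ and $E(k)-b+\tau\geq c_1 k^2$. After substituting $\theta=\theta(k)$ and extending the $\theta$-integration to $\bbR$ by positivity as in \eqref{4.23}--\eqref{4.24}, the $S_\tau$ contribution is dominated by $VG^cV^*$ on $\ell^2(\bbZ)$, where $V$ is multiplication by $c_5^{1/2}v_n(0)$ and $G^c$ is convolution with $G^c_\tau(k)=\frac{1}{2\sqrt{c_4\tau}}e^{-|k|\sqrt{\tau/c_4}}$ restricted to $k\in\bbZ$.

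The main obstacle, and essentially the only novelty compared to Theorem~\ref{T4.1}, is the comparison of this continuous-type Green's function $G^c$ on $\ell^2(\bbZ)$ with a genuine discrete resolvent. The plan is to handle it by Fourier analysis: by Poisson summation, the symbol of $G^c$ on $L^2(\bbT)$ is $\hat G^c(\xi)=\sum_{n\in\bbZ}[c_4(\xi+2\pi n)^2+\tau]^{-1}$, and a term-by-term comparison with the symbol $[c_4(2-2\cos\xi)+\tau]^{-1}$ of $(-c_4\Delta_d+\tau)^{-1}$ (where $\Delta_d$ denotes the discrete Laplacian on $\bbZ$), using $2-2\cos\xi\leq\xi^2$ on $[-\pi,\pi]$ for the $n=0$ term and a uniformly bounded tail sum for $n\neq 0$, yields $G^c\leq C(-c_4\Delta_d+\tau)^{-1}$ as operators for $\tau$ in a bounded range. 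Combining with $B_-\leq D$ via $N(B_-^{1/2}KB_-^{1/2}>\lambda)=N(K^{1/2}B_-K^{1/2}>\lambda)\leq N(K^{1/2}DK^{1/2}>\lambda)=N(D^{1/2}KD^{1/2}>\lambda)$ for $K\geq 0$, and absorbing the phases of $v_n(0)$ into a diagonal unitary, the count reduces to $N(\tilde V^{1/2}(-c_4\Delta_d+\tau)^{-1}\tilde V^{1/2}>\lambda)$ with $\tilde V_n=c_5 D_{nn}|v_n(0)|^2$ diagonal, nonnegative, and $\|\tilde V\|_1\leq C\sum_n(|\delta a_n|+|\delta b_n|)$. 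The discrete Birman--Schwinger principle rewrites this as $N(-c_4\Delta_d-C_2\tilde V<-\tau)$, and the integration argument of Lemma~\ref{L4.2}, combined with Hundertmark--Simon's critical Lieb--Thirring bound for $-\Delta_d$ on $\ell^2(\bbZ)$, yields \eqref{4.40}.
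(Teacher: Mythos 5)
Your proof is correct and takes essentially the same route as the paper. The decomposition you introduce --- diagonal dominator $D$ and $B_\pm=\tfrac12(D\pm\delta J)$ --- is, after simplification, precisely the Hundertmark--Simon decomposition $\delta J_\pm$ the paper records in \eqref{4.26}--\eqref{4.28}, and the overall structure (decouple via Theorem~\ref{T1.4}, split off $P_{[b,b+\veps)}(J_0)$, write the kernel as $(S_\tau+T_\tau)^*(S_\tau+T_\tau)$, reduce the $S_\tau$ piece to a Birman--Schwinger count for a discrete Schr\"odinger operator, integrate in $\tau$, invoke the Hundertmark--Simon critical Lieb--Thirring bound) is exactly the paper's, which states only that ``the proof is identical to that of Theorem~\ref{T4.1}.'' The one step where you had to supply genuine content the paper leaves implicit --- passing from the restriction of the continuum Green's function to $\bbZ$ to the resolvent $(-c_4\Delta_d+\tau)^{-1}$ --- you handle correctly via Poisson summation (the tail $\sum_{n\neq0}[c_4(\xi+2\pi n)^2+\tau]^{-1}$ is uniformly bounded, and since $\tau$ ranges over the bounded interval $(0,b-a)$ the resulting constant is uniform). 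A more economical variant, closer in spirit to the continuum argument of \eqref{4.23}--\eqref{4.24}, bypasses $\bbR$ entirely: shrink $\delta$ so that $\theta(\delta)\le\pi$ (permissible, since the hypotheses are stable under shrinking $\delta,\veps$), extend the $\theta$-integral only to $[-\pi,\pi]$ by positive-definiteness of $e^{i\theta(m-n)}$, and use $2-2\cos\theta\le\theta^2$ to dominate the symbol $(c_4\theta^2+\tau)^{-1}$ by $(c_4(2-2\cos\theta)+\tau)^{-1}$ pointwise, giving the kernel of $c_5\,v_\bddot(0)\,(-c_4\Delta_d+\tau)^{-1}\,\ol{v_\bddot(0)}$ directly. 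Either way the argument is sound.
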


The analog of $(H_0-E)^{-1/2}$ bounded from $L^2$ to $L^\infty$ is missing since $\ell^2 \subset\ell^\infty$, and thus
\begin{equation} \lb{4.41}
\norm{(J_0-E_0)^{-1/2} f}_\infty \leq \dist (E_0,\sigma(J_0))^{-1/2} \norm{f}_2
\end{equation}
With this remark and the bound of \cite{HS}, the proof is identical to that of Theorem~\ref{T4.1} if we use an
additional argument. Following \cite{HS}, we define $\delta J_\pm$ to be the Jacobi matrices with parameters
\begin{align}
\delta b_n^\pm &= \max\{0,\pm b_n\} + \tfrac12\, a_n + \tfrac12\, a_{n+1} \lb{4.26} \\
\delta a_n^\pm &= \pm \tfrac12\, a_n \lb{4.27}
\end{align}
so $\delta J_\pm \geq 0$ as matrices, $\delta J=\delta J_+ - \delta J_-$, and
\begin{equation} \lb{4.28}
\norm{(\delta J_\pm)^{1/2}}_{\text{\rm{HS}}}^2 = \tr(\delta J_\pm) \leq \sum_n \, \abs{b_n} + 2a_n
\end{equation}

Finally, we need to say something about the sum over eigenvalues on semi-infinite intervals but a distance $1$ from
$\sigma(H_0)$ or $\sigma(J_0)$ (since Theorems~\ref{T4.1} and \ref{T4.4} control the sum of $(\inf \sigma(J_0)-1,
\inf\sigma(H_0))$, and similarly for $J_0$). We discuss the discrete case first.

\begin{proposition}\lb{P4.5} Let $A$ be a bounded operator on a Hilbert space and $B$ trace class with $\alpha =
\inf\sigma(A)$. Then
\begin{equation} \lb{4.42}
\sum_{\substack{e\in\sigma_\di (A+B) \\ e\leq \alpha -1}} (\alpha-e)^{1/2} \leq \tr(\abs{B})
\end{equation}
\end{proposition}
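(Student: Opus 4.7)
The plan is to reduce the half-power bound on the tail to a full-power bound, and then apply the variational principle.

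First, since $B$ is trace class, it is compact, so Weyl's theorem gives $\sigma_\e(A+B)=\sigma_\e(A)\subset[\alpha,\infty)$. Consequently, every point of $\sigma(A+B)$ strictly below $\alpha$ is a discrete eigenvalue, and the sum on the left of \eqref{4.42} ranges over all spectral points of $A+B$ in $(-\infty,\alpha-1]$. For any such eigenvalue $e$ one has $\alpha-e\geq 1$, hence $(\alpha-e)^{1/2}\leq (\alpha-e)$. It therefore suffices to prove the stronger inequality
\[
\sum_{\substack{e\in\sigma_\di(A+B)\\ e<\alpha}} (\alpha-e) \leq \tr(|B|),
\]
since the left side of \eqref{4.42} is dominated by this (larger) sum after the substitution $(\alpha-e)^{1/2}\leq\alpha-e$ on the truncated range.

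Next, I would recognize the left side above as $\tr((A+B-\alpha)_-)$: by the previous paragraph, the negative spectrum of $A+B-\alpha$ is purely discrete, and $(\alpha-e)$ is precisely the absolute value of the corresponding negative eigenvalue of $A+B-\alpha$. Now write $A+B-\alpha = (A-\alpha) + B$ with $A-\alpha\geq 0$. The min-max principle yields the standard monotonicity: if $X,Y$ are selfadjoint with $X\leq Y$ (and a compact perturbation is present to make the relevant eigenvalues discrete), then for each $k$ the $k$-th eigenvalue of $X$ counted from the bottom is $\leq$ that of $Y$. Applying this to $X=B$ and $Y=B+(A-\alpha)$, and using that $t\mapsto t_-$ is nonincreasing, one obtains
\[
\tr\bigl((A+B-\alpha)_-\bigr)\leq \tr(B_-)\leq \tr(|B|).
\]
Combining this with the reduction above proves \eqref{4.42}.

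The only mildly delicate point is justifying the trace-class monotonicity $\tr((B+X)_-)\leq\tr(B_-)$ when $X\geq 0$ is merely bounded rather than trace class; this is handled by noting that for any orthonormal system $\varphi_1,\dots,\varphi_m$ in the negative spectral subspace of $B+X$,
\[
\sum_{j=1}^m (\alpha-e_j) = -\sum_{j=1}^m\langle\varphi_j,(B+X-\alpha+\alpha)\varphi_j\rangle\text{ (after shift)}\leq -\sum_{j=1}^m\langle\varphi_j,B\varphi_j\rangle\leq \tr(B_-),
\]
where the last step uses $\langle\varphi_j,X\varphi_j\rangle\geq 0$ and the min-max bound $-\sum\langle\varphi_j,B\varphi_j\rangle\leq\tr(B_-)$. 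Taking $m\to\infty$ gives the desired bound; no further approximation is needed.
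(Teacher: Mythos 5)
Your proposal is correct and takes essentially the same approach as the paper: it replaces $(\alpha-e)^{1/2}$ by $\alpha-e$ using $\alpha-e\geq 1$, and then uses the eigenvector identity $\alpha-e_j=\langle\varphi_j,(\alpha-A)\varphi_j\rangle-\langle\varphi_j,B\varphi_j\rangle\leq -\langle\varphi_j,B\varphi_j\rangle$ together with $\sum_j\langle\varphi_j,B_-\varphi_j\rangle\leq\tr(B_-)\leq\tr(|B|)$. The intermediate reformulation in terms of $\tr((A+B-\alpha)_-)$ and min-max monotonicity is a harmless repackaging; your final paragraph reproduces the paper's direct variational computation.
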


\begin{proof} Let $\{e_n\}_{n=1}^\infty$ be a counting of the eigenvalues in $(-\infty, \alpha-1)$ and
$\{\varphi_n\}_{n=1}^\infty$ the eigenvectors. Then, since $\alpha-e_n\geq 1$,
\begin{align}
\sum_{n=1}^\infty (\alpha-e_n)^{1/2}
&\leq \sum_{n=1}^\infty (\alpha - e_n) \notag \\
&\leq \sum_{n=1}^\infty (\varphi_n, (\alpha-A)\varphi_n) - (\varphi_n, B\varphi_n) \notag \\
&\leq \sum_{n=1}^\infty (\varphi_n, B_- \varphi_n) \lb{4.43} \\
&\leq \tr(\abs{B})
\end{align}
where \eqref{4.43} comes from $A\geq \alpha$.
\end{proof}

\begin{proposition} \lb{P4.6} Let $h_0=-\f{d^2}{dx^2}$ on $L^2(\bbR,dx)$. Let $H_0$ be an operator for which, for some
$\gamma >0$,
\begin{equation} \lb{4.44}
H_0\geq\gamma h_0 + \beta
\end{equation}
Let $\alpha = \inf\sigma(H_0)$. Then there exists $C_1,C_2 >0$ so that for all $V\in L^1$,
\begin{equation} \lb{4.45}
\sum_{\substack{ e\in\sigma_\di (H_0+V) \\ e<\alpha -C_1}} (\alpha -e)^{1/2} \leq C_2 \int \abs{V(x)}\, dx
\end{equation}
\end{proposition}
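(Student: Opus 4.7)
The plan is to bound the sum by reducing to the sharp critical one-dimensional Lieb--Thirring bound of Weidl \cite{Weidl} (with optimal constant $L_{1/2,1}=\tfrac12$ from Hundertmark--Lieb--Thomas \cite{HLT}). The key observation is that \eqref{4.44} and $V\geq -V_-$ give the form inequality
\[
H_0+V\geq \gamma h_0+\beta-V_-,
\]
so the min-max principle yields the counting-function comparison
\[
N(H_0+V<E)\leq N(\gamma h_0+\beta-V_-<E) = N\bigl(h_0-\tfrac{1}{\gamma}V_- < \tfrac{E-\beta}{\gamma}\bigr)
\]
for every $E\in\bbR$. Since $V_-\in L^1$ is infinitesimally form small with respect to $h_0$ via the Sobolev embedding $H^1(\bbR)\hookrightarrow L^\infty(\bbR)$, all operators on the right are well defined.

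Next I apply the layer-cake identity to the left-hand side of \eqref{4.45}:
\[
\sum_{\substack{e\in\sigma_\di(H_0+V)\\ e<\alpha-C_1}}(\alpha-e)^{1/2}
= C_1^{1/2}\, N(H_0+V<\alpha-C_1) + \int_{C_1}^\infty \tfrac{1}{2}\tau^{-1/2}\, N(H_0+V<\alpha-\tau)\, d\tau.
\]
Choosing $C_1\geq(\alpha-\beta)+\gamma$ (note $\alpha\geq\beta$ since $\inf\sigma(\gamma h_0+\beta)=\beta$) forces $(\alpha-\tau-\beta)/\gamma\leq -1$ for $\tau\geq C_1$, so the comparison moves every relevant eigenvalue of $h_0-V_-/\gamma$ strictly below its essential spectrum $[0,\infty)$ and into the quantitative regime of the critical bound
\[
\sum_{\substack{\mu\in\sigma_\di(h_0-V_-/\gamma)\\ \mu<0}}(-\mu)^{1/2}\leq \tfrac{1}{2\gamma}\|V_-\|_1.
\]
A Chebyshev bound applied to the first (lump) term (any eigenvalue of $h_0-V_-/\gamma$ with $\mu<-1$ contributes at least $1$ to the above sum) and the change of variable $\sigma=(\tau-(\alpha-\beta))/\gamma$ in the integral together reduce both pieces to constant multiples of $\tfrac{1}{2\gamma}\|V_-\|_1$, giving \eqref{4.45} with a constant $C_2$ depending only on $\gamma$, $\alpha-\beta$, and the chosen $C_1$. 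Since $\|V_-\|_1\leq \|V\|_1$, this proves the proposition.

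There is no substantive obstacle here: once the operator comparison is in place, the problem is literally a rescaling of the Weidl--HLT bound. The only point demanding mild care is the choice of the threshold $C_1$, which must be taken large enough (in terms of $\gamma$ and $\alpha-\beta$) that the comparison places every counted eigenvalue of $h_0-V_-/\gamma$ below $0$ by a definite margin, so that the sharp critical bound controls each term uniformly.
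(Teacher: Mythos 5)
Your proposal is correct and follows essentially the same route as the paper: both hinge on the operator comparison $H_0+V\geq \gamma h_0+\beta-V_-$ (so $N(H_0+V<E)\leq N(h_0-\gamma^{-1}V_-<\gamma^{-1}(E-\beta))$) followed by an application of the critical Weidl--Hundertmark--Lieb--Thomas bound for $h_0-\gamma^{-1}V_-$. The only difference is in how the $\beta$-shifted eigenvalue sum is converted into the $\alpha$-shifted one: the paper simply invokes the arithmetic inequality $\alpha-e\leq(\beta-e)(\alpha-\beta+1)$ for $e<\beta-1$ and takes $C_1=\alpha-\beta+1$, whereas you use the layer-cake identity together with a Chebyshev estimate for the lump term and a change of variables in the integral, which yields the same conclusion with a comparable constant.
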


\begin{proof} By \eqref{4.44}, $\beta\leq\alpha$. Let $e<\beta$. Then, by \eqref{4.44},
\begin{align*}
N(H_0+V\leq e) &\leq N(\gamma h_0 +V\leq e-\beta) \\
&= N(h_0 + \gamma^{-1} V\leq \gamma^{-1} (e-\beta))
\end{align*}
so using the critical Lieb--Thirring bound for $h_0$,
\begin{equation} \lb{4.46}
\sum_{e<\beta}  \sqrt{\gamma^{-1} (\beta-e)} \leq \tfrac12\, \gamma^{-1} \int V(x)\, dx
\end{equation}
If $e<\beta-1$, then $\alpha -e\leq (\beta-e)(\alpha-\beta+1)$, so
\[
\sum_{e<\beta-1}  \sqrt{\alpha-e}\leq \tfrac12\, (\alpha-\beta+1)^{1/2} \gamma^{-1/2} \int
\abs{V(x)}\, dx
\qedhere
\]
\end{proof}

\section{One-Dimensional Periodic Schr\"odinger Operators} \lb{s5}

In this section, we prove Theorem~\ref{T1.1}, that is, prove critical Lieb--Thirring bounds in individual gaps
for perturbations of periodic Schr\"odinger operators. So $h_0=-\f{d^2}{dx^2}$ on $L^2 (\bbR,dx)$ and $V_0$ is
a periodic potential with
\begin{equation} \lb{5.1}
V_0(x+2\pi) = V_0(x)
\end{equation}
(there is no loss with picking the period to be $2\pi$). We suppose
\begin{equation} \lb{5.2}
\int_{-\pi}^\pi \abs{V_0(x)}\, dx < \infty
\end{equation}

Then, by a Sobolev estimate, $V_0$ is a form-bounded perturbation of $h_0$ with relative bound zero. Thus, $H_0=
h_0 + V_0$ is a well-defined form sum, and if $E_0 <\inf \sigma(H_0)$, then $(h_0+1)^{1/2} (H_0-E_0)^{-1/2}$ is
bounded from $L^2$ to $L^2$. So by a Sobolev estimate, $(H_0-E_0)^{-1/2}$ is bounded from $L^2$ to $L^\infty$, that is,
(ii) of Theorem~\ref{T4.1} is valid.

The following facts are well known (see \cite[Sect.~XIII.16]{RS4} which supposes $V_0$ bounded, but no changes are
needed to handle the locally $L^1$ case; see also \cite{MW}):
\begin{SL}
\item[(i)] If $U\colon L^2(\bbR,dx)\to L^2([0,2\pi),L^2([0,2\pi],dx); \f{d\varphi}{2\pi})$ is defined by
\begin{equation} \lb{5.3}
(Uf)_\varphi(x)=\sum_{n=-\infty}^\infty e^{-i\varphi n} f(x+2\pi n)
\end{equation}
then $U$ is unitary.

\item[(ii)] If $h_0 (\varphi)$ is defined for $\varphi\in [0,2\pi)$ on $L^2 ([0,2\pi],dx)$ as $-\f{d^2}{dx^2}$ with
boundary conditions
\begin{equation} \lb{5.4}
u(2\pi) = e^{i\varphi}u(0) \qquad u'(2\pi) = e^{i\varphi} u'(0)
\end{equation}
and $H(\varphi) = h_0(\varphi) + V_0$, then
\begin{equation} \lb{5.5}
U\!H\!U^{-1} g_\varphi = H(\varphi)g_\varphi
\end{equation}

\item[(iii)] Each $H(\varphi)$ has compact resolvent and so eigenvalues $\{\veps_j(\varphi)\}_{j=1}^\infty$ and
eigenvectors $u_j^{(\varphi)}(x)$ so that
\begin{equation} \lb{5.6}
H(\varphi) u_j^{(\varphi)} = \veps_j^{(\varphi)} u_j^{(\varphi)}
\end{equation}
If, for $x\in [0,2\pi)$,
\begin{equation} \lb{5.7}
v_j^{(\varphi)}(x) = e^{-i \varphi x/2\pi} u_j^{(\varphi)} (x)
\end{equation}
then, by \eqref{5.4}, $v_j$ has a periodic extension and all $v_j^{(\varphi)}$ lie in $Q(h_0(\varphi\equiv 0))$
and obey (where $p=-id/dx$)
\begin{equation} \lb{5.8}
\biggl[ h_0(0) +2 \f{\varphi}{2\pi} \, p + \biggl( \f{\varphi}{2\pi}\biggr)^2 + V_0\biggr] v_j^{(\varphi)}
=\veps_j (\varphi) v_j^{(\varphi)}
\end{equation}
If the operator in $[\dots]$ in \eqref{5.8} is $\wti H(\varphi)$, then it is a Kato analytic family of type (B).
Moreover, for any single $j$, $v_j\in Q(h_0)$ with bounded norm, by a Sobolev estimate,
\begin{equation} \lb{5.8a}
\sup_{\varphi, x} \abs{v_j^{(\varphi)}(x)} <\infty
\end{equation}
for each fixed $j$.

\item[(iv)] $\veps_j (2\pi -\varphi) = \veps_j(\varphi)$ and $v_j^{(2\pi -\varphi)} = \ol{v_j^{(\varphi)}}$. On
$[0,\pi]$, $(-1)^{j+1}\veps_j$ is strictly monotone increasing, so $\veps_1(0) < \veps_1(\pi)\leq \veps_2(\pi) <
\veps_2(0) \leq \veps_3(0)\leq \cdots < \veps_{2j-1}(\pi) \leq \veps_{2j}(\pi) < \veps_{2j}(0) \leq \veps_{2j+1}(0)
\dots$. The gaps in $\spec(H)$ are exactly the nonempty $(\veps_{2j-1}(\pi), \veps_{2j}(\pi))$ and
$(\veps_{2j}(0),\veps_{2j+1}(0))$. If such a gap is nonempty, we say it is an open gap.

\item[(v)] There is an entire analytic function $\Delta(E)$ so that
\begin{equation} \lb{5.9}
\Delta(\veps_j(\varphi)) =2 \cos(\varphi)
\end{equation}
and a gap is open if and only if $\Delta'(\veps)\neq 0$ at the endpoints of the gap. It then follows from \eqref{5.9}
that at an open gap,
\begin{equation} \lb{5.10}
\veps'_j(0 \text{ or } \pi) =0 \qquad \veps''_j (0 \text{ or } \pi) \neq 0
\end{equation}
\end{SL}

\smallskip
This says that the framework of Theorem~\ref{T4.1} is applicable. For notational simplicity, we consider an open gap at
$\varphi=0$ (below, if $\varphi =\pi$, replace $k=\varphi/2\pi$ by $k=(\varphi-\pi)/2\pi$ and the associated $v_j$ is
then antiperiodic) and the top end of the gap at energy $b = \veps_n(0)$. We take $\delta=1/4$, $k=\varphi/2\pi$,
and $\theta(k)=k$. $E(k)=\veps_n (2\pi k)$. For $0\leq x < 2\pi$,
\begin{equation} \lb{5.11}
u(x + 2\pi m, k) = u_n^{(2\pi k)} (x) e^{2\pi imk}
\end{equation}
using the boundary condition \eqref{5.4}.  We set $\veps=E(\f14)=\veps_n(\f{\pi}{2})$. $\ran(P_{[b,b+\veps]}(H_0))$ is
exactly those $f$ with $(Uf)_\varphi = 0$ if $\varphi\notin (-\f{\pi}{2}, \f{\pi}{2})$ and equal to a multiple of
$u_n^{(\varphi)}$ if $\varphi\in (-\f{\pi}{2},\f{\pi}{2})$
\[
\ti f(k) = \jap{(Uf)_{(\varphi=2\pi k)}, u_n^{(2\pi k)}}
\]

\eqref{4.4} holds with $\rho(k)\equiv 1$, so \eqref{4.5} is immediate. \eqref{4.6} holds by the fact that $\veps$ is real
analytic on $(-\pi,\pi)$ and that \eqref{5.10} holds. \eqref{4.7} holds by \eqref{5.8a}.

\eqref{4.9} holds because $v$ is periodic in $x$ and $u$ is real analytic in $k$ with $\f{du}{dk}=0$. (viii) and
(ix) are immediate.

Theorem~\ref{T4.1} thus implies Theorem~\ref{T1.1}.

\smallskip
We have only controlled individual gaps. It is natural to ask if one can sum over all the typically infinitely many gaps.
We believe this will be difficult with our methods. The issue involves the constant $c_3$ in \eqref{4.9}. For large $n$,
the $n$-th band has size $O(n)$ near an energy of $O(n^2)$. The size $g_n$ of the $n$-th gap is small. If $v_0$ is $C^\infty$,
it is known (Hochstadt \cite{Hoch}) that $g_n=o(n^k)$ for all $k$; and for $v_0(x)=\lambda\cos (x)$, it is known
(\cite{AS}) that $g_n\sim n^{-2n}$. Away from $k=0$ or $\pi$, $\veps'_n(k)\sim n$ and it goes from $\veps'_n=0$ to
$n$ in a distance of size $O(g_n)$, that is, we expect $\veps''_n(0)\sim g_n^{-1}n$. Thus, we expect $c_3$ to be
$O(ng_n^{-1})$. While $c_3$ is divided by $c_1$, which is also large, $c_3\sim\sup_{\abs{k}\leq \delta} \veps''_n(k)$,
while $c_1\sim\inf_{\abs{k}\leq\delta} \veps''(k)$. So unless we take $\delta\downarrow 0$ (which itself causes
difficulties), the cancellation will only be partial. Thus, we have not been able to sum over all gaps.

\section{Critical Lieb--Thirring Bounds and Generalized Nevai Conjecture
for Finite Gap Jacobi Matrices} \lb{s6}

In this section, we turn to perturbations of elements of the isospectral torus of Jacobi matrices assigned to a finite
gap set, $\fre$, as described in the introduction. Our main goal is:

\begin{theorem} \lb{T6.1} Let $\fre$ be a finite gap set and $(\beta_j, \alpha_{j+1})$ a gap in $\bbR\setminus\fre$.
Let $\{a_n^{(0)}, b_n^{(0)}\}_{n=-\infty}^\infty$ be an element of the isospectral torus. Then for a constant $C$ and any
$\{a_n,b_n\}_{n=1}^\infty$ a set of Jacobi parameters obeying the two-sided analog of \eqref{1.11},
\begin{equation} \lb{6.1}
\sum_{e\in (\beta_j, \alpha_{j+1})\cap\sigma(J)} \dist(e,\sigma_\e(J))^{1/2}
\leq C \sum_{n=-\infty}^\infty \, \abs{a_n-a_n^{(0)}} + \abs{b_n - b_n^{(0)}}
\end{equation}
\end{theorem}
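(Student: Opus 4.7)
The plan is to reduce Theorem~\ref{T6.1} to Theorem~\ref{T4.4} applied twice at the two edges of the gap $(\beta_j,\alpha_{j+1})$. First, observe that since $\sum_n |a_n-a_n^{(0)}|+|b_n-b_n^{(0)}|<\infty$, the difference $\delta J = J-J_0$ is trace class, so $\sigma_\e(J)=\sigma_\e(J_0)=\fre$. Split the gap at its midpoint $m_j=(\beta_j+\alpha_{j+1})/2$. For eigenvalues in $(\beta_j,m_j]$, the quantity $\dist(e,\sigma_\e(J))^{1/2}=(e-\beta_j)^{1/2}$, which is controlled by an application of Theorem~\ref{T4.4} at the upper edge of the lower band (i.e., controlling eigenvalues in $[a,b)$ of the theorem with $b=\beta_j$, coming from below in energy); symmetrically for $[m_j,\alpha_{j+1})$. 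Thus the theorem follows once the hypotheses (i)--(viii) of Theorem~\ref{T4.4} are verified for $J_0$ at each gap edge.

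The main work is the verification of these hypotheses using the known harmonic/function-theoretic description of elements of the isospectral torus (as in \cite{CSZ1}). For $\fre$ a finite gap set, the Riemann surface $\mathcal{S}_\fre$ of $\sqrt{\prod_j(z-\alpha_j)(z-\beta_j)}$ carries a Baker--Akhiezer-type function $u_n^{(\varphi)}$ parametrized by a point $\varphi$ on the isospectral torus and $n\in\bbZ$, such that $J_0 u^{(\varphi)} = E(\varphi) u^{(\varphi)}$, $|u_n^{(\varphi)}|$ is uniformly bounded, and $u_n^{(\varphi)}=e^{i\theta(\varphi)n}v_n(\varphi)$ with $v_n$ bounded and almost periodic in $n$. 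Here $\theta$ is the quasi-momentum of $\fre$, a real-analytic function of energy on each band with $\theta'\neq 0$ in the band interior and $\theta(E)-\theta(\beta_j)\sim C(E-\beta_j)^{1/2}$ near a gap edge. Choosing $k=\theta(E)-\theta(\beta_j)$ (or $k=\theta(E)-\theta(\alpha_{j+1})$) as local coordinate and using the integral representation for $P_{[b,b+\veps)}(J_0)$ obtained from the band/gap structure, one obtains an expansion of the form \eqref{4.30}--\eqref{4.32} with density $\rho(k)$ bounded above and below (this is equivalent to the standard statement that $d\theta$ is comparable to the harmonic measure of $\fre$ in the band interior and extends continuously with nonzero density to the edges, via the square-root behavior of $\theta(E)$). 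The analyticity of $E$ and $u_n^{(\varphi)}$ in the spectral parameter across the edge, together with the reflection symmetry $E(k)=E(-k)$ and $u_n(-k)=\overline{u_n(k)}$, gives \eqref{4.37} and \eqref{4.39}, while \eqref{4.34} comes from the strict nonvanishing of $E''$ at the edge (an instance of the regular band edge condition, which for finite gap matrices is automatic).

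The main technical obstacle is therefore the construction and regularity of the quasi-momentum and the eigenfunctions near the gap edges; once this is in hand, the hypotheses of Theorem~\ref{T4.4} are verified essentially by inspection. Finally, to get a bound in terms of $\sum_n|\delta a_n|+|\delta b_n|$ as on the right-hand side of \eqref{6.1} rather than in terms of some operator norm of $\delta J$, one writes $\delta J = \delta J_+ - \delta J_-$ via the decomposition \eqref{4.26}--\eqref{4.27}, so that each $\delta J_\pm\geq 0$ is trace class with $\tr(\delta J_\pm)\leq \sum_n|\delta b_n|+2|\delta a_n|$; this is exactly the input needed by the Birman--Schwinger step in Theorem~\ref{T4.4}. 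Adding the bounds from both edges of the gap and invoking Proposition~\ref{P4.5} (for the spillover outside $[a,b-\tau]$ near the $\pm\infty$ ends of each band, which is not needed here since we are inside a bounded gap) yields \eqref{6.1}.
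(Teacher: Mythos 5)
Your overall strategy is the same as the paper's: split the gap $(\beta_j,\alpha_{j+1})$, apply Theorem~\ref{T4.4} (and its lower-edge variant, cf.\ Remark~1 after Theorem~\ref{T4.1}) at each band edge, build the eigenfunction expansion from the function theory of the isospectral torus (covering map, quasi-momentum, Baker--Akhiezer/Jost functions), and feed the $\delta J_\pm$ decomposition \eqref{4.26}--\eqref{4.27} into the Birman--Schwinger step. Two genuine gaps remain, however. First, the hypotheses \eqref{4.35} and especially \eqref{4.37} require bounds on $u_n(k)$ and on the second $k$-derivative of $v_n(k)$ that are \emph{uniform in $n\in\bbZ$}; pointwise real analyticity of $u_n^{(\varphi)}$ in the spectral parameter for each fixed $n$, which is what you appeal to, does not deliver this. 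The paper gets uniformity from the explicit identity $\J_n(z;\vy)=a_n(\vy)^{-1}B(z)^n\J(z;T^n\vy)$: after stripping off $B(z)^n=e^{i\theta n}$, the $n$-dependence of $v_n(k)$ sits entirely in the torus point $T^n\vy$, and since $\J$ is (uniformly) analytic on a neighborhood of $\{e^{i\varphi}\}\times\bbG$ with $\bbG$ compact, the $k$-derivatives are bounded uniformly in $n$. Your phrase ``$v_n$ bounded and almost periodic in $n$'' gestures at this but does not actually supply the uniform second-derivative bound that \eqref{4.37} needs.

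Second, you omit the degeneracy at the band edge: the Weyl solutions of \cite{BRS} are normalized by $U_0^\pm(\lambda)=1$, i.e.\ $U_n^+=\J_n/\J_0$, and if $\J_0(e^{i\varphi_0};\vy=0)=0$ (i.e.\ \eqref{6.19} fails) then $u_n(k)$ blows up as $k\to 0$ and \eqref{4.35} fails. The paper handles this by observing that a nonzero solution of a Jacobi difference equation cannot vanish at two consecutive sites, so after shifting $J^{(0)}$ by one the normalization is nondegenerate; since the conclusion of the theorem is translation-invariant, this suffices. Finally, a small but real confusion: at the lower edge of the gap you write of ``controlling eigenvalues in $[a,b)$ of the theorem with $b=\beta_j$'' — this is not what the lower-edge variant does. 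With $b=\beta_j$ at the top of band $j$, the variant (with $E(k)\le b-c_1k^2$) controls eigenvalues in an interval of the form $(\beta_j,a]$ inside the gap and bounds $\sum (e-\beta_j)^{1/2}$, not eigenvalues below $\beta_j$.
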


\begin{remarks} 1. The proof shows $C$ can be chosen independently of the point on the isospectral torus of $\fre$.

\smallskip
2. The proof works on $(\alpha_1-1,\alpha_1)$ and $(\beta_{\ell+1}, \beta_{j+1}+1)$ and then, using Proposition~\ref{P4.5},
one gets bounds for $e\in (-\infty, \alpha_1)\cap\sigma(J)$ and for $e\in (\beta_{\ell+1},\sigma)\cap\sigma(J)$, and
then since there are finitely many gaps:
\end{remarks}

\begin{corollary} \lb{C6.2} Under the hypotheses of Theorem~\ref{T6.1},
\begin{equation} \lb{6.2}
\sum_{e\in\sigma(J)} \dist(e,\sigma_\e(J))^{1/2} \leq \text{RHS of \eqref{6.1}}
\end{equation}
\end{corollary}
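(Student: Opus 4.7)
The plan is to decompose $\bbR\setminus\fre$ into its finitely many connected components and bound the sum of $\dist(e,\sigma_\e(J))^{1/2}$ over $\sigma_\di(J)$ in each piece. Write $R=\sum_n(\abs{a_n-a_n^{(0)}}+\abs{b_n-b_n^{(0)}})$ for the right-hand side of \eqref{6.1} (divided by $C$). Since $\delta J:=J-J_0$ is compact under \eqref{1.11}, $\sigma_\e(J)=\sigma_\e(J_0)=\fre$, and each discrete eigenvalue of $J$ lies in exactly one of the $\ell$ bounded gaps $(\beta_j,\alpha_{j+1})$ or in one of the two unbounded rays $(-\infty,\alpha_1)$ and $(\beta_{\ell+1},\infty)$; on each ray the distance to $\sigma_\e(J)$ is the distance to the single nearest band edge.

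The $\ell$ bounded gaps are controlled directly by a single application of Theorem~\ref{T6.1} to each, contributing at most $\ell C R$ in total. It remains to handle the two unbounded rays, which are symmetric, so I describe $(-\infty,\alpha_1)$ only. I would split it at $\alpha_1-1$. For the near-edge piece $(\alpha_1-1,\alpha_1)$, I would apply Theorem~\ref{T4.4} with $a=\alpha_1-1$ and $b=\alpha_1$ in the sign-flipped (lower band edge) form noted in Remark~1 after Theorem~\ref{T4.1}. The hypotheses (i)--(viii) of Theorem~\ref{T4.4} are verified at $\alpha_1$ by exactly the same almost-periodic Floquet-type eigenfunction expansion that the proof of Theorem~\ref{T6.1} uses at the endpoints of an interior gap, yielding
\[
\sum_{e\in(\alpha_1-1,\alpha_1)\cap\sigma_\di(J)}(\alpha_1-e)^{1/2}\leq C'R.
\]

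For the far piece $(-\infty,\alpha_1-1]$, I would invoke Proposition~\ref{P4.5} with $A=J_0$ (which is bounded and has $\inf\sigma(J_0)=\alpha_1$) and $B=\delta J$. To get $\delta J$ trace class with the right quantitative bound, apply the decomposition \eqref{4.26}--\eqref{4.27} to the parameter differences $\delta a_n=a_n-a_n^{(0)}$, $\delta b_n=b_n-b_n^{(0)}$: this writes $\delta J=(\delta J)_+-(\delta J)_-$ with $(\delta J)_\pm\geq 0$ and, by \eqref{4.28}, $\tr(\delta J)_\pm\leq\sum_n(\abs{\delta b_n}+2\abs{\delta a_n})\leq 2R$, so $\tr\abs{\delta J}\leq 4R$. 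Proposition~\ref{P4.5} then gives
\[
\sum_{\substack{e\in\sigma_\di(J)\\ e\leq\alpha_1-1}}(\alpha_1-e)^{1/2}\leq 4R.
\]
Combining the interior contribution with the four semi-infinite contributions (two near-edge and two far-edge) yields \eqref{6.2}.

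The only step that requires actual verification rather than bookkeeping is the near-edge one, and the main potential obstacle is confirming that the Floquet-type expansion of the almost-periodic $J_0$ has precisely the structure demanded by Theorem~\ref{T4.4} at the outermost edges $\alpha_1$ and $\beta_{\ell+1}$. Because every point on the isospectral torus of $\fre$ has quadratic, regular band edges at each $\alpha_j,\beta_j$ (interior or terminal), this verification is identical to the one carried out in the proof of Theorem~\ref{T6.1}, so no new analytic ingredient is needed.
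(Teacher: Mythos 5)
Your proposal is correct and follows exactly the route the paper sketches in Remark~2 after Theorem~\ref{T6.1}: sum the bound of Theorem~\ref{T6.1} over the $\ell$ bounded gaps, apply the same verification of Theorem~\ref{T4.4}'s hypotheses to the terminal band edges $\alpha_1$ and $\beta_{\ell+1}$ on the unit-length intervals adjacent to them, and use Proposition~\ref{P4.5} (via the $\delta J_\pm$ decomposition \eqref{4.26}--\eqref{4.28}) for $e$ at distance $\geq 1$ from the spectrum. You have simply filled in the bookkeeping that the paper's one-sentence remark leaves implicit; there is no gap.
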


This then implies

\begin{proof}[Proof of Theorem~\ref{T1.3}] Christiansen, Simon, and Zinchenko \cite[Thm.~4.5]{CSZ2} prove that
\eqref{1.12} is implied by
\begin{alignat}{2}
&\text{(a)} \qquad && \text{LHS of \eqref{6.2}}  <\infty \lb{6.3} \\
&\text{(b)} \qquad && \lim\biggl( \f{a_1 \dots a_n}{C(\fre)^n}\biggr) \text{ exists in } (0,\infty) \lb{6.4}
\end{alignat}
\eqref{6.3} follows from \eqref{1.11}, \eqref{6.2}, and an eigenvalue interlacing argument (since \eqref{6.2} is for
full-line operators). (b) is immediate from $\sum_{n=1}^\infty \abs{a_n-a_n^{(0)}}<\infty$ and the analog of
\eqref{6.4} for $a_j^{(0)}$ (see \cite[Cor.~7.4]{CSZ2}).
\end{proof}

We will prove Theorem~\ref{T6.1} by showing the applicability of our Theorem~\ref{T4.4}. This will require the theory
of eigenfunction expansions for one-dimensional a.c.\ reflectionless systems and the theory of Jost functions for
finite gap operators, where we'll follow the presentations of Breuer--Ryckman--Simon \cite{BRS} and
Christiansen--Simon--Zinchenko \cite{CSZ1}, respectively. We'll use their theorems but not their precise notation
since there are conflicts between our notation in Section~\ref{s4} and theirs.

We'll use $U_n^\pm (\lambda)$ for the Weyl solutions of \cite{BRS} at energy $\lambda$, defined for Lebesgue a.e.\
$\lambda\in\sigma(J^{(0)})$. They obey $J^{(0)} U^\pm = \lambda U^\pm$ and are normalized by
\begin{equation} \lb{6.5}
U_0^\pm (\lambda) =1
\end{equation}
Since $J_0$ is reflectionless (see \cite{Rice}), we have
\begin{equation} \lb{6.6}
U_n^- = \ol{U_n^+}
\end{equation}
so the functions $f_\pm(\lambda)$ of \cite[eqn~(2.4)]{BRS} are equal with
\begin{equation} \lb{6.7}
f_\pm (\lambda) = -(4\pi a_0)^{-1} (\Ima U_1^+(\lambda))^{-1}
\end{equation}
which we call $f$ below. Theorem~2.2 of \cite{BRS} implies that if (for $\varphi_n\in\ell^1\cap\ell^\infty$)
\begin{equation} \lb{6.8}
\widehat\varphi_\pm (\lambda) = \sum_n\, \ol{U_n^\pm(\lambda)}\, \varphi_n
\end{equation}
then
\begin{gather}
\varphi_n = \int [\widehat\varphi_+(\lambda) U_n^+ (\lambda) + \widehat\varphi_- (\lambda) U_n^-(\lambda)]
f(\lambda)\, d\lambda \lb{6.9} \\
\widehat{J\varphi_\pm} (\lambda) = \lambda\, \widehat\varphi_\pm (\lambda) \lb{6.10} \\
\norm{P_{a,b}(J_0)\varphi}^2 = \int_a^b (\abs{\widehat\varphi_+(\lambda)}^2 + \abs{\widehat\varphi_-(\lambda)}^2)
f(\lambda)\, d\lambda \lb{6.11}
\end{gather}

From \cite{CSZ1}, we need the covering map $\x\colon\bbC\cup\{\infty\}\setminus\calL\to\calS$, where $\calS$ is the
two-sheeted compact Riemann surface associated to the function
\begin{equation} \lb{6.11x}
D(x) =\biggl(\, \prod_{j=1}^{\ell+1} (x-\alpha_j)(x-\beta_j)\biggr)^{1/2}
\end{equation}
$\calL$, the limit set of a certain Fuchsian group, is a closed, nowhere dense, perfect subset of $\partial\bbD=\{z\mid
\abs{z}=1\}$. There is an open subset, $\calF\subset\bbD$ on which $\x$ is one-one to $\bbC\cup\{\infty\}\setminus\fre$,
whose closure is a fundamental domain for the Fuchsian group. For any band, $[\beta_j,\alpha_{j+1}]$, in $\bbR\setminus\fre$,
there are $e^{i\varphi_0}, e^{i\varphi_1}\in\partial\bbD$ with $\varphi_0 <\varphi_1$, so $\varphi\mapsto \x(e^{i\varphi})$
maps $(\varphi_0,\varphi_1)$ bijectively onto the upper lip of the cut $(\beta_j,\alpha_{j+1})$. What is crucial for
us is that
\begin{equation} \lb{6.12}
\f{\partial\x(e^{i\varphi})}{\partial\varphi} \neq 0, \quad \varphi\in (\varphi_0,\varphi_1); \qquad
\f{\partial \x}{\partial\varphi} =0, \quad \f{\partial^2 \x}{\partial\varphi^2}\neq 0
\quad\text{at $\varphi_0$ or $\varphi_1$}
\end{equation}
$\x$ is analytic in a neighborhood of $\{e^{i\varphi}\mid \varphi\in (\varphi_0,\varphi_1)\}$.

The fundamental Blaschke function, $B$, associated to $\x$ is a meromorphic function on $\bbC\cup\{\infty\}\setminus\calL$,
which is a Blaschke product, and so obeys
\begin{equation} \lb{6.13}
\abs{z} <1 \Rightarrow \abs{B(z)} < 1 \qquad \abs{z}=1\Rightarrow \abs{B(z)}=1
\end{equation}
This, in turn, implies on $\partial\bbD\setminus\calL$,
\begin{equation} \lb{6.14}
B(e^{i\varphi}) = e^{i\ti\theta(\varphi)} \qquad \f{\partial\ti\theta}{\partial\varphi} >0
\end{equation}
and $\wti\theta$ is real analytic on $\partial\bbD\setminus\calL$.

We will let $\delta <\varphi_1 - \varphi_0$ and define, for $k\in(-\delta,\delta)$,
\begin{equation} \lb{6.15}
E(k) = \beta_j + \x(e^{i(\varphi_0 + k)})
\end{equation}
for $k\geq 0$ and $E(k)$ even. It is real analytic on $(-\delta,\delta)$ by \eqref{6.12}. Define
\begin{equation} \lb{6.16}
\theta(k) = \begin{cases}
\ti\theta(k+\varphi_1) - \ti\theta(\varphi_1) & k>0 \\
-\theta(-k) & k<0
\end{cases}
\end{equation}
which is $C^\infty$ in $k$.

We let $\bbG$ denote the isospectral torus. There is a real analytic map $T\colon\bbG\to\bbG$ and a coordinate system
on $\bbG$ in which $T$ is a group translation, and functions $A,B$ on $\bbG$ so that
\begin{equation} \lb{6.17}
a_n (\vy) = A(T^n \vy) \qquad b_n(\vy) = B(T^n\vy)
\end{equation}
for the Jacobi parameters for the Jacobi matrix $J^{(\vy)}$ with $\vy$ in $\bbG$.

There are functions $\J(z;\vy)$ (the Jost function) for $z\in\bbC\cup\{\infty\}\setminus\calL$, $\vy\in\bbG$ which are
meromorphic in $z$, real analytic in $\vy$, and whose only poles lie in $\bbC\cup\{\infty\}\setminus\ol{\bbD}$ with
limit points only in $\calL$. In particular, $\J$ is analytic, uniformly in $\vy$, for $z$ in a neighborhood of
$\{e^{i\varphi}\mid \varphi\in [\varphi_0,\varphi_1]\}$. The Jost solution is given by
\begin{equation} \lb{6.18}
\J_n (z;y) = a_n (y)^{-1} B(z)^n \J(z;T^n(\vy))
\end{equation}

Suppose, for now, that the original Jacobi matrix, $J^{(0)}$, corresponding to $\vy=0$, has
\begin{equation} \lb{6.19}
\J_{n=0} (e^{i\varphi_0}; \vy=0)\neq 0
\end{equation}
(equivalently, $\J(e^{i\varphi_0}; \vy=0)\neq 0$). $\J_n$ solves the difference equation $J^{(\vy)}\J_n(z;y) =
x(z) \J_n(z;y)$, so to get the normalization condition \eqref{6.5}, we have
\begin{equation} \lb{6.20}
U_n^+ (\lambda) = \f{\J_n (\z(\lambda); \vy=0)}{\J_0 (\z(\lambda); \vy=0)}
\end{equation}
where $\z(\lambda)$ is determined by $\x(\z(\lambda)) =\lambda$ with $\z(\lambda)\in \{e^{i\varphi}\mid\varphi_0
\leq \varphi\leq \varphi_1\}$.

We define $\rho(k)$ by
\begin{equation} \lb{6.21}
\rho(k) = \begin{cases}
f(E(k))\, \f{d}{dk} \, \x(e^{i(\varphi_0 +k)}) & k\geq 0 \\
\rho(-k) & k<0
\end{cases}
\end{equation}

We define $u_n^+(k)$ for $k\in (-\delta,\delta)$ by
\begin{equation} \lb{6.22}
u_n^+(k) = \begin{cases}
U_n^+ (E(k)) \rho(k) & k>0 \\
\ol{U_n^+} \,(E(k)) \rho(k) & k<0
\end{cases}
\end{equation}

Finally,
\begin{equation} \lb{6.23}
\wti\varphi(k) = \begin{cases}
\widehat\varphi_+(E(k)) & k\geq 0 \\
\widehat\varphi_- (E(k)) & k<0
\end{cases}
\end{equation}

$\rho$ is picked to turn $f(\lambda)\, d\lambda$ in \eqref{6.11} to $\rho(k)\, dk$. It is then straightforward to check
that \eqref{4.30} and \eqref{4.32} hold. Away from $k=0$, $\rho(k)$ is smooth, bounded, and nonvanishing. Since $u_j^+ 
(k=0)=0$, $\Ima u_1^+ (k=0)=0$ and $f$ blows up there, but exactly as $1/k[\left.\theta'(k)\right|_{k=0}]$. Since
$\f{\partial \x}{\partial k}$ vanishes as $k$, by \eqref{6.12}, $\rho$ has a smooth nonzero limit as $k\downarrow 0$,
that is, \eqref{4.33} holds.

The relation \eqref{6.12} shows that at $k=0$, $E'(k)=0$, $E''(k)\neq 0$, so \eqref{4.34} holds. Since $\J$ is uniformly
bounded on $\bbG$ when $z\in \{e^{i\varphi}\mid\varphi_0\leq\varphi \leq\varphi_1\}$, \eqref{4.35} follows from \eqref{6.18}.

$\theta$ is defined so the $B(z)^n$ in \eqref{4.18} is replaced by $B(e^{i\varphi_0})^n$ in the formula for $v$.
Thus, $k$ derivatives are derivatives of $\J(e^{i(\varphi_0 + k)}, T^n (\vy =0))$ which are bounded uniformly  in $n$
by compactness of $\bbG$. First derivatives are zero and second derivatives are uniformly bounded in $n$ and $k\in
(0,\delta)$, so \eqref{4.37} holds. \eqref{4.36} follows from \eqref{6.14}. Thus, if \eqref{6.19} holds, Theorem~\ref{T4.4}
is applicable and proves Theorem~\ref{T6.1}.

Since nonzero solutions of a Jacobi eigenfunction equation cannot vanish at two successive points, if \eqref{6.19} fails
for $\{a_n^{(0)}, b_n^{(0)}\}_{n=-\infty}^\infty$, it will not for $\{a_{n+1}^{(0)}, b_{n+1}^{(0)}\}_{n=-\infty}^\infty$,
so we get Theorem~\ref{T6.1} for a translated $J^{(0)}$. But since the conclusions are translation invariant, the theorem
for the translated $J^{(0)}$ implies it for the original $J^{(0)}$.

Using the extensive literature on finite gap continuum Schr\"odinger operators (see Gesztesy--Holden \cite{GH} and
references therein), it should be possible to prove a continuum analog of the results of this section.

\section{Dirac Equations} \lb{s7}

Our decoupling results in Section~\ref{s2} allow us to obtain some bounds on eigenvalues in the gap of one-dimensional
Dirac operators. We will not require the results of Section~\ref{s4}. Let $\sigma_1,\sigma_3$ be the standard Pauli
matrices, $\sigma_1 = (\begin{smallmatrix} 0 & 1 \\ 1& 0\end{smallmatrix})$, $\sigma_3 =(\begin{smallmatrix}1 & 0 \\ 0 & -1
\end{smallmatrix})$, $p=\f{1}{i}\, \f{d}{dx}$ on $L^2 (\bbR,dx)$, and
\begin{equation} \lb{7.1}
D_0 = p\sigma_1 + m\sigma_3 =
\left( \begin{array}{rr}
m & p \\
p & -m
\end{array} \right)
\end{equation}
be the free Dirac operator on $L^2 (\bbR,\bbC^2;dx)$. Here we'll prove

\begin{theorem}\lb{T7.1} Let $\gamma \geq \f12$ and $V\in L^{\gamma+1/2}(\bbR,dx)\cap L^{\gamma +1}(\bbR,dx)$. If
$E_j$ denotes the eigenvalues of $D_0 +V$ in the gap $(-m,m)$, counting multiplicities, then
\begin{equation} \lb{7.2}
\sum_j (m-\abs{E_j})^\gamma \leq C_{1,\gamma} \int_\bbR \abs{V(x)}^{\gamma + 1}\, dx + C_{2,\gamma} \sqrt{m}
\int_\bbR \abs{V(x)}^{\gamma + 1/2}\, dx
\end{equation}
for some constants $C_{1,\gamma}, C_{2,\gamma}$ independent of $V$\! and $m$.
\end{theorem}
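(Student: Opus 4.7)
The plan is to combine the decoupling in Theorem~\ref{T1.4} with the algebraic identity $D_0^2=p^2+m^2$ in order to reduce matters to a one-dimensional Lieb--Thirring bound for the Klein--Gordon operator $\sqrt{p^2+m^2}$. I would begin with the layer-cake representation
\[
\sum_j (m-|E_j|)^\gamma = \gamma\int_0^m \tau^{\gamma-1}\, N\bigl(D_0+V\in(-m+\tau,m-\tau)\bigr)\,d\tau
\]
and apply Theorem~\ref{T1.4} with $A=D_0$, $B_\pm=V_\pm$, $\alpha=-m+\tau$, $\beta=m-\tau$. Writing $c=m-\tau$, this bounds the integrand by $N(V_+^{1/2}(D_0+c)^{-1}V_+^{1/2}<-1) + N(V_-^{1/2}(D_0-c)^{-1}V_-^{1/2}>1)$.

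The main trick is to exploit $\sigma(D_0)=(-\infty,-m]\cup[m,\infty)$. Let $P_\pm$ denote the spectral projections of $D_0$ onto the positive/negative energy subspaces. Then $(D_0+c)^{-1}=R_+-R_-$ with $R_\pm\ge 0$, and on the range of $P_-$ one has $|D_0+c|=\sqrt{p^2+m^2}-m+\tau$, so $R_-=(\sqrt{p^2+m^2}-m+\tau)^{-1}P_-$. The elementary min--max observation that $N(X-Y<-1)\le N(Y>1)$ for self-adjoint $X,Y\ge 0$, combined with the operator inequality $V_+^{1/2}R_-V_+^{1/2}\le V_+^{1/2}(\sqrt{p^2+m^2}-m+\tau)^{-1}V_+^{1/2}$ (which follows from $P_-\le I$ and the commutativity of $P_-$ with $|D_0|$), gives
\[
N\bigl(V_+^{1/2}(D_0+c)^{-1}V_+^{1/2}<-1\bigr)\le N\bigl(V_+^{1/2}(\sqrt{p^2+m^2}-m+\tau)^{-1}V_+^{1/2}>1\bigr),
\]
which by the standard Birman--Schwinger principle equals $N(\sqrt{p^2+m^2}-V_+<m-\tau)$. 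A mirror argument using the positive part of $(D_0-c)^{-1}$ on the range of $P_+$ produces the analogous bound $N(\sqrt{p^2+m^2}-V_-<m-\tau)$ for the second term.

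Integrating back in $\tau$ via layer-cake then yields
\[
\sum_j (m-|E_j|)^\gamma\le \sum_\pm \sum_j \bigl(m-\widetilde e_j^{\,\pm}\bigr)_+^\gamma,
\]
where $\widetilde e_j^{\,\pm}$ are the eigenvalues below $m$ of the relativistic Schr\"odinger operator $\sqrt{p^2+m^2}-V_\pm$. The theorem then follows upon invoking the Daubechies-type Lieb--Thirring bound for the one-dimensional ``Klein--Gordon minus mass'' operator $\sqrt{p^2+m^2}-m-V_\pm$, which supplies exactly the two-term estimate $C_{1,\gamma}\int |V|^{\gamma+1}+C_{2,\gamma}\sqrt{m}\int |V|^{\gamma+1/2}$; the two terms mirror the two asymptotic regimes of $\sqrt{p^2+m^2}-m$ (like $p^2/(2m)$ at small $|p|$ and like $|p|$ at large $|p|$). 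The main obstacle is this last ingredient: the one-dimensional relativistic Lieb--Thirring inequality at the critical endpoint $\gamma=1/2$. For $\gamma>1/2$, Daubechies' original argument suffices; the endpoint requires either pushing her argument to the limit or adapting the Hundertmark--Lieb--Thomas technique to the Klein--Gordon setting.
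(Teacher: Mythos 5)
Your reduction of the Dirac problem to the scalar Klein--Gordon operator is essentially the paper's Theorem~\ref{T7.2}: the spectral decomposition $(D_0+c)^{-1}=R_+-R_-$ with $R_-=(\sqrt{p^2+m^2}-c)^{-1}P_-$ is just the operator-form restatement of the paper's pointwise observation that $(\begin{smallmatrix} m-E & p \\ p & -m-E\end{smallmatrix})$ has eigenvalues $-E\pm\sqrt{p^2+m^2}$, which gives $\mp(D_0\pm E)^{-1}\leq(H_0+m-E)^{-1}\otimes I$. That part is correct (modulo a suppressed factor of $2$ from the $\otimes I$ on spinor space, which is immaterial since constants are not tracked).

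However, you have not proved the result. You explicitly defer the key ingredient --- a two-term Lieb--Thirring bound for $\sqrt{p^2+m^2}-m-W$ valid down to $\gamma=\tfrac12$ --- and call it ``the main obstacle,'' suggesting one would need to push Daubechies' argument to the endpoint or adapt Hundertmark--Lieb--Thomas to the Klein--Gordon setting. Neither route is needed, and the first would not obviously close: the point of the paper's Proposition~\ref{P7.3} is that this bound follows from the already-known critical bounds by a decoupling argument. Concretely, fix $\rho>0$, let $P$ be the spectral projection of $H_0=\sqrt{p^2+m^2}-m$ onto $[0,\rho m)$, and split $N(W^{1/2}(H_0-\tau)^{-1}W^{1/2}>1)$ via Proposition~\ref{P2.1} into a $P$-piece and a $P^\perp$-piece. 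On $\ran P$ one has $H_0\geq \f{c_1}{m}p^2$ and on $\ran P^\perp$ one has $H_0\geq c_2\abs{p}$, with $c_1,c_2$ as in \eqref{7.10}; the $P$-piece is then controlled by the critical HLT bound \eqref{7.4} for $p^2$ (giving the $\sqrt m\int W^{\gamma+1/2}$ term) and the $P^\perp$-piece by the Daubechies bound \eqref{7.5} for $\abs{p}$ (giving the $\int W^{\gamma+1}$ term). The two-term structure you correctly anticipate from the two asymptotic regimes of $\sqrt{p^2+m^2}-m$ thus comes for free, and no new endpoint Lieb--Thirring inequality has to be established. Without supplying this step, your proof is incomplete precisely at the point you flagged.
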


The proof below yields explicit values of the constants.

The idea of the proof is to use Theorem~\ref{T1.4} to reduce bounds to the scalar operators $\sqrt{p^2+m^2}-m-V_\pm$,
and then to use Lieb--Thirring inequalities for $p^2 - V_\pm$ and for $\abs{p}-V_\pm$ to control $\sqrt{p^2 + m^2}
-m-V_\pm$.

\begin{theorem}\lb{T7.2} Let $\gamma >0$ and $V\in L^{\gamma + 1/2} (\bbR,dx) \cap L^{\gamma+1}(\bbR)$. If $E_j$ denotes
the eigenvalues of $D_0 + V$ in $(-m,m)$, then
\begin{equation} \lb{7.3}
\sum_j (m-\abs{E_j})^\gamma \leq 2 [S_\gamma (H_0-V_-) + S_\gamma (H_0 - V_+)]
\end{equation}
where $H_0$ is the operator $\sqrt{p^2 + m^2} - m$ on $L^2 (\bbR,dx)$.
\end{theorem}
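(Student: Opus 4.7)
The plan is to apply the gap Birman--Schwinger bound Theorem~\ref{T1.4} to $A = D_0$, $B_+ = V_+$, $B_- = V_-$, with $\alpha = -m + \tau$ and $\beta = m - \tau$ for $\tau \in (0,m)$, and then integrate the resulting estimate for $N(D_0 + V \in (-m + \tau, m - \tau))$ against $\gamma \tau^{\gamma-1}\, d\tau$ on $(0,m)$, using the layer-cake identity
\[
\sum_j (m - \abs{E_j})^\gamma = \gamma \int_0^m \tau^{\gamma-1}\, N(D_0 + V \in (-m+\tau, m-\tau))\, d\tau.
\]
This reduces the problem to bounding each of the two Birman--Schwinger counts for the Dirac resolvent by $2\, N(H_0 - V_\pm < -\tau)$; a second layer-cake identity then converts these into $2\, S_\gamma(H_0 - V_\pm)$.

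The crux is the algebraic identity $D_0^2 = (p^2 + m^2)\, I_{\bbC^2}$, which gives $\abs{D_0} = \sqrt{p^2+m^2}\otimes I_{\bbC^2}$ (block-diagonal as two copies of $H_0 + m$ on $L^2(\bbR)$) and diagonalizes $D_0 = \abs{D_0}(\Lambda_+ - \Lambda_-)$ via the spectral projections $\Lambda_\pm = \tfrac12(I \pm D_0/\abs{D_0})$ onto the half-axes $[m,\infty)$ and $(-\infty,-m]$. These projections commute with $\abs{D_0}$ and satisfy $\Lambda_+\Lambda_- = 0$, so
\[
(D_0 - \beta)^{-1} = \Lambda_+(\abs{D_0}-\beta)^{-1} + \Lambda_-(-\abs{D_0}-\beta)^{-1},
\]
with the first term positive and the second negative for $\beta \in (0,m)$; moreover
\[
(\abs{D_0}-\beta)^{-1} = \Lambda_+(\abs{D_0}-\beta)^{-1}\Lambda_+ + \Lambda_-(\abs{D_0}-\beta)^{-1}\Lambda_-
\]
is a trivial orthogonal decomposition.

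For the $V_-$ contribution one discards the negative $\Lambda_-$-piece of $(D_0-\beta)^{-1}$ and absorbs $\Lambda_+$ using the second identity to conclude
\[
V_-^{1/2}(D_0-\beta)^{-1} V_-^{1/2} \leq V_-^{1/2}(\abs{D_0}-\beta)^{-1} V_-^{1/2},
\]
the remainder $V_-^{1/2}\Lambda_-(\abs{D_0}-\beta)^{-1}\Lambda_- V_-^{1/2}$ being manifestly $\geq 0$. The standard Birman--Schwinger principle \eqref{1.14} on $L^2(\bbR,\bbC^2)$ converts $N(\,\cdot\, > 1)$ into $N(\abs{D_0} - V_- < \beta)$, and the block-diagonal structure of $\abs{D_0}$ provides the factor of two: $N(\abs{D_0}-V_- < \beta) = 2\, N(H_0 - V_- < -\tau)$. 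The $V_+$ contribution is treated symmetrically: keep the $\Lambda_-$-piece of $(D_0-\alpha)^{-1}$ (negative for $\alpha = -m+\tau$), obtain $V_+^{1/2}(D_0-\alpha)^{-1}V_+^{1/2} \geq -V_+^{1/2}(\abs{D_0}+\alpha)^{-1}V_+^{1/2}$ by the same projection identity, and apply Birman--Schwinger at the admissible energy $-\alpha = m - \tau < m = \inf\sigma(\abs{D_0})$.

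The main obstacle is the sign-bookkeeping in the two decoupling steps: one must carefully choose which half of $(D_0-\cdot)^{-1}$ to discard so that the resulting operator inequality points the right way for $N(\,\cdot\, > 1)$ and for $N(\,\cdot\, < -1)$ respectively, and verify that the remainders from the $\Lambda_\pm$-decomposition of $(\abs{D_0}-\beta)^{-1}$ really have the correct sign (they do, because the $\Lambda_-$-part of a positive resolvent is itself positive and vice versa). The remaining issues are routine: the hypotheses $\alpha,\beta\notin\sigma(D_0)\cup\sigma(D_0+V_+)\cup\sigma(D_0-V_-)\cup\sigma(D_0+V)$ required by Theorem~\ref{T1.4} fail on only a countable set of $\tau$ and so do not affect the integral, and the relative form compactness of $V_\pm$ with respect to $D_0$ is standard under the $L^{\gamma+1/2}\cap L^{\gamma+1}$ hypothesis.
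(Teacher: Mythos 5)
Your argument is correct and follows the paper's proof essentially verbatim: layer-cake identity, Theorem~\ref{T1.4} on the symmetric interval, the operator inequality $\mp(D_0\pm E)^{-1}\le(H_0+m-E)^{-1}\otimes I_{\bbC^2}$ (which you derive via the spectral projections $\Lambda_\pm$ of $D_0$ and the identity $D_0^2=p^2+m^2$, where the paper instead reads off the eigenvalues of the $2\times2$ symbol directly---the same fact, packaged differently), Birman--Schwinger, the factor of $2$ from the block-diagonal tensor structure, and a second layer-cake to produce $S_\gamma$. The only thing worth noting is that both you and the paper apply Theorem~\ref{T1.4} to the non-semibounded $D_0$ without comment, but since the paper does exactly the same, this is not a discrepancy between your proof and theirs.
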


We emphasize that we consider the operator $H_0$ acting on {\it spinless} (i.e., scalar) functions. One might wonder
whether the inequality is true without the factor of $2$.

\begin{proof} By Theorem~\ref{T1.4} and \eqref{4.14}, one has
\begin{align}
\sum_j (m-\abs{E_j})^\gamma
&= \gamma \int_0^m (m-E)^{\gamma-1} N(D_0 + V\in (-E,E))\, dE \notag \\
&\leq \gamma \int_0^m (m-E)^{\gamma-1} (N(V_-^{1/2} (D_0-E)^{-1} V_-^{1/2} >1) \notag \\
&\qquad \qquad + N(V_+^{1/2} (D_0+E)^{-1} V_+^{1/2} <-1))\, dE \lb{7.4x}
\end{align}
The $2\times 2$ matrix, $(\begin{smallmatrix} m-E & p \\ p & -m-E\end{smallmatrix})$, has eigenvalues $-E\pm
\sqrt{p^2+ m^2}$, which implies the operator inequalities
\[
\mp(D_0\pm E)^{-1} \leq (H_0 + m-E)^{-1} \otimes I
\]
Using this and the Birman--Schwinger principle, we find that
\begin{align*}
N(V_-^{1/2} (D_0-E)^{-1} V_-^{1/2} >1 )
&\leq 2N(V_-^{1/2} (H_0 + m-E)^{-1} V_-^{1/2} >1) \\
&= 2N(H_0 -V_- < -m+E)
\end{align*}
and
\begin{align*}
N(V_+^{1/2} (D_0 +E)^{-1} V_+^{1/2} < -1)
&\leq 2N(V_+^{1/2} (H_0 + m-E)^{-1} V_+^{1/2} >1 ) \\
&= 2N(H_0-V_+ <-m+E)
\end{align*}
Plugging this into \eqref{7.4x} and changing variables $\tau = m-E$, we obtain
\[
\sum_j (m-\abs{E_j})^\gamma \leq 2\gamma \int_0^m \tau^{\gamma-1} (N(H_0-V_- < -\tau) +
N(H_0 - V_+ <-\tau))\, d\tau
\]
Extending the integration to the whole interval $(0,\infty)$, we obtain \eqref{7.3}.
\end{proof}

Theorem~\ref{T7.1} follows immediately from Theorem~\ref{T7.2} and Proposition~\ref{P7.3} below. It will rely on
classical Lieb--Thirring bounds for $p^2 +V$ and those for $\abs{p}+V$  in the following form (see Remark~4
on page~517 of \cite{Daub} or eqn.~(13) in \cite{FLS}):
\begin{alignat}{2}
S_\gamma (p^2 + V) &\leq L_\gamma \int_\bbR V(x)_-^{\gamma + 1/2}\, dx &&\qquad \gamma \geq \tfrac12 \lb{7.4} \\
S_\gamma (\abs{p}+V) &\leq \ti L_\gamma \int_\bbR V(x)_-^{\gamma+1}\, dx && \qquad \gamma >0 \lb{7.5}
\end{alignat}

\begin{proposition} \lb{P7.3} Let $\gamma \geq \f12$ and let $0\leq W \in L^{\gamma+1/2}(\bbR,dx)
\cap L^{\gamma+1}(\bbR)$. Then
\begin{equation} \lb{7.6}
S_\gamma (H_0-W) \leq C_{1,\gamma} \int_\bbR W(x)^{\gamma+1}\, dx + C_{2,\gamma} \sqrt{m}
\int_\bbR W(x)^{\gamma + 1/2}\, dx
\end{equation}
for some constants $C_{1,\gamma}, C_{2,\gamma}$ independent of $W$\! and $m$.
\end{proposition}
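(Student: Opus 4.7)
The plan is to reduce the statement to the two known Lieb--Thirring bounds \eqref{7.4}, \eqref{7.5} by bounding the resolvent of $H_0 = \sqrt{p^2+m^2} - m$ by a sum of a resolvent of $|p|$ (governing the ultrarelativistic large-momentum regime) and a resolvent of $p^2/m$ (governing the nonrelativistic low-momentum regime). Using the layer-cake identity
\[
S_\gamma(H_0-W) = \gamma \int_0^\infty \tau^{\gamma-1} N(H_0 - W < -\tau)\,d\tau
\]
together with the Birman--Schwinger principle $N(H_0-W<-\tau) = N(W^{1/2}(H_0+\tau)^{-1}W^{1/2}>1)$, the task reduces to controlling the compact operator $W^{1/2}(H_0+\tau)^{-1}W^{1/2}$ in terms of counting functions of classical operators.

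The key input is a pointwise operator inequality on the Fourier side. Starting from the identity $H_0 = p^2/(\sqrt{p^2+m^2}+m)$ and the trivial bound $\sqrt{p^2+m^2}\leq |p|+m$, I obtain the operator inequality
\[
H_0 \geq \frac{p^2}{|p|+2m}.
\]
A two-case analysis of the resulting scalar function of $|p|$ (in the regime $|p|\geq 2m$ one bounds the numerator $|t|+2m$ by $2|t|$ and drops $2\tau m$ from the denominator; in the regime $|p|\leq 2m$ one bounds $|t|+2m$ by $4m$ and drops $\tau|t|$ from the denominator) then produces
\[
(H_0+\tau)^{-1} \leq 2(|p|+\tau)^{-1} + 4(p^2/m+2\tau)^{-1}.
\]

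Applying Proposition~\ref{P2.1} to $W^{1/2}(H_0+\tau)^{-1}W^{1/2}$, which by the above is majorized by $2W^{1/2}(|p|+\tau)^{-1}W^{1/2}+4W^{1/2}(p^2/m+2\tau)^{-1}W^{1/2}$, and then Birman--Schwinger on each piece, gives
\[
N(H_0 - W < -\tau) \leq N(|p| - 4W < -\tau) + N(p^2/m - 8W < -2\tau).
\]
Integrating $\gamma\tau^{\gamma-1}$ from $0$ to $\infty$, the first term contributes $S_\gamma(|p|-4W)$ and the second, after the substitution $\sigma=2\tau$, contributes $2^{-\gamma}S_\gamma(p^2/m-8W)$. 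Invoking \eqref{7.5} on $S_\gamma(|p|-4W)$ produces a multiple of $\int W^{\gamma+1}$. For the second term I first use the unitary rescaling $(Uf)(y)=m^{-1/4}f(y/\sqrt m)$ to identify $S_\gamma(p^2/m - 8W)$ with $S_\gamma(p^2 - 8W(\cdot/\sqrt m))$ on $L^2(\bbR,dy)$, and then apply \eqref{7.4}; the Jacobian of the spatial substitution produces exactly the prefactor $\sqrt m$ multiplying $\int W^{\gamma+1/2}$.

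The main obstacle is producing a decomposition of $(H_0+\tau)^{-1}$ as a sum of two pieces, each of which reproduces one of the two limiting regimes of $H_0$ with the correct $\tau$-dependence and without spurious $m$-dependent shifts that would spoil the integration against $\tau^{\gamma-1}$ near $\tau=0$. Once this resolvent inequality is in hand, the remainder of the argument is a routine application of the decoupling principle of Section~\ref{s2} together with the classical one-dimensional Lieb--Thirring bounds and the scaling identity that converts $p^2/m$ into $p^2$.
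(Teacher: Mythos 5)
Your proof is correct and follows the same overall route as the paper: reduce via the layer-cake formula and the Birman--Schwinger principle to a counting bound for $W^{1/2}(H_0+\tau)^{-1}W^{1/2}$, split into a nonrelativistic (low-momentum, $\sim p^2/m$) and ultrarelativistic (high-momentum, $\sim |p|$) regime, decouple with Proposition~\ref{P2.1}, invoke the one-dimensional Lieb--Thirring bounds \eqref{7.4}, \eqref{7.5}, and use scaling to turn $p^2/m$ into $p^2$, which is the source of the $\sqrt m$ prefactor. The one genuine variation is how the momentum decomposition is implemented. The paper introduces spectral projections $P,P^\perp$ at a free scale $\rho m$ together with the pointwise kinetic-energy bounds \eqref{7.8}--\eqref{7.9} and an auxiliary splitting parameter $\theta\in(0,1)$, which carries around some notational freedom for optimizing constants. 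You instead start from the exact algebraic identity $H_0 = p^2/(\sqrt{p^2+m^2}+m)$, weaken it to $H_0\geq p^2/(|p|+2m)$, and extract a single global resolvent inequality
\[
(H_0+\tau)^{-1}\leq 2(|p|+\tau)^{-1}+4(p^2/m+2\tau)^{-1},
\]
valid for all $\tau>0$, before decoupling. This is a tidier piece of bookkeeping: it bypasses the spectral projections (and the small inconsistency in the paper between intervals for $H_0$ and for $|p|$), keeps the $\tau$-dependence transparent (so there are no spurious $m$-dependent shifts to worry about in the $\tau$-integral), and hardcodes explicit numerical constants where the paper would optimize over $\rho,\theta$. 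The trade-off is a mild loss in the constants compared to the paper's sharper local bounds \eqref{7.10}, but since the claim is only up to constants independent of $W$ and $m$, nothing is lost.
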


\begin{remark} One could replace the right side of \eqref{7.6} by a phase space bound.
\end{remark}

\begin{proof} Using the Birman--Schwinger principle, we write
\begin{align}
S_\gamma (H_0-W) &= \gamma \int_0^\infty N(H_0-W \leq -\tau) \tau^\gamma\, d\tau  \notag \\
&=  \gamma \int_0^\infty N(W^{1/2} (H_0-\tau)^{-1} W^{1/2} >1) \tau^\gamma\, d\tau \lb{7.6x}
\end{align}
In order to estimate $N(W^{1/2} (H_0-\tau)^{-1} W^{1/2} >1)$, we fix two parameters, $0<\theta <1$
and $\rho >0$, and denote by $P$ and $P^\perp$ the spectral projections of $H$ onto the intervals
$[0,\rho m)$ and $[m\rho,\infty)$, respectively. By Proposition~\ref{P2.1},
\begin{equation} \lb{7.7}
\begin{split}
N(W^{1/2} (H_0-\tau)^{-1} W^{1/2} >1) &\leq N(W^{1/2} P(H_0-\tau)^{-1} W^{1/2} >\theta) \\
&\qquad N(W^{1/2} P^\perp (H_0-\tau)^{-1} W^{1/2} >1-\theta)
\end{split}
\end{equation}

There are constants, $c_1, c_2 >0$, depending on $\rho$ such that
\begin{alignat}{2}
\sqrt{p^2+m^2} - m & \geq \f{c_1}{m}\, p^2 && \qquad \text{if } \abs{p}\leq \rho m \lb{7.8} \\
\sqrt{p^2+m^2} - m & \geq c_2 \abs{p} && \qquad \text{if } p\geq \rho m \lb{7.9}
\end{alignat}
Indeed, one can choose
\begin{equation} \lb{7.10}
c_1 = \f{\sqrt{\rho^2+1} -1}{\rho^2} \qquad c_2 = \f{\sqrt{\rho^2 +1}-1}{\rho}
\end{equation}
This and the Birman--Schwinger principle yield
\begin{align}
N(W^{1/2} P(H_0-\tau)^{-1} W^{1/2} >\theta )
&\leq N \biggl(W^{1/2} \biggl( \f{c_1 p^2}{m} - \tau\biggr)^{-1} W^{1/2} >\theta\biggr) \notag \\
&= N\biggl( \f{c_1 p^2}{m} -\theta^{-1} W<-\tau\biggr) \lb{7.11}
\end{align}
and
\begin{align*}
N(W^{1/2} P^\perp (H-\tau)^{-1} W^{1/2} >1-\theta)
&\leq N(W^{1/2} (c_2 \abs{p}-\tau)^{-1} W^{1/2} >1-\theta) \\
&= N(c_2 \abs{p}-(1-\theta)^{-1} W <-\tau)
\end{align*}
Plugging this into \eqref{7.6x} and doing the $\tau$-integration, we arrive at
\[
S_\gamma (H_0-w) \leq S_\gamma \biggl( \f{c_1 p^2}{m}-\theta^{-1} W\biggr) +
S_\gamma (c_2 \abs{p} -(1-\theta)^{-1} W)
\]
Using \eqref{7.4} and \eqref{7.5}, we get
\[
\begin{split}
S_\gamma (H_0-W) & \leq c_1^{-1/2} \theta^{-\gamma - 1/2} L_\gamma \sqrt{m} \int W^{\gamma+1/2}\, dx  \\
&\qquad \quad +  c_2^{-1} (1-\theta)^{-\gamma-1} \ti L_\gamma \int W^{\gamma +1}\, dx
\end{split}
\]
This completes the proof of the proposition.
\end{proof}

\section*{Appendix: Index Theory Proof of Proposition~\ref{P2.3}} \lb{App}
\renewcommand{\theequation}{A.\arabic{equation}}
\renewcommand{\thetheorem}{A.\arabic{theorem}}
\setcounter{theorem}{0}
\setcounter{equation}{0}

Here we'll provide a proof of Proposition~\ref{P2.3} using the theory of the index of a pair of orthogonal projections
from \cite{ASS}. This makes explicit the approach of Pushnitski \cite{Push} in his proofs of Proposition~\ref{P2.3}
and Theorem~\ref{T1.4}. Recall that if $P,Q$ are projections with
\begin{equation} \lb{A.1}
\dist(P-Q, \text{ compact operators}) <1
\end{equation}
(and, in particular, if $P-Q$ is compact), one can define an integer index $(P,Q)$ by the equivalent definitions:
\begin{align}
\idx(P,Q) &= \dim\ker (P-Q-1) -\dim\ker (Q-P-1) \lb{A.1a} \\
&=\dim (\ran\, P\cap \ran\, Q^\perp) -\dim\ker (\ran\, Q\cap\ran\, P^\perp) \lb{A.1b} \\
&= \text{Fredholm index of $QP$ as a map of $\ran\, P$ to $\ran\, Q$} \lb{A.1c}
\end{align}
One has \cite{ASS}:
\begin{SL}
\item[(a)] If $Q-R$ is compact, then
\begin{equation} \lb{A.2}
\idx(P,R) = \idx (P,Q) + \idx (Q,R)
\end{equation}
whenever \eqref{A.1} holds. This comes from \eqref{A.1c}, compactness of $P(Q-R)Q$ and invariance of the Fredholm index
under compact perturbations.

\item[(b)] If $P-Q$ is finite rank, then
\begin{equation} \lb{A.3}
\idx(P,Q) = \text{trace}(P-Q)
\end{equation}
and, in particular, if $P\geq Q$ also, so $\ran\, Q\subset\ran\, P$, then
\begin{equation} \lb{A.4}
\idx (P,Q) = \dim(\ran\, P\cap \ran\, Q^\perp)
\end{equation}
\item[(c)] If $Q(x)$ is norm-continuous in $x$ for $x\in [a,b]$ and $Q(x)-P$ is compact for all such $x$, then
\begin{equation} \lb{A.5}
\idx(Q(b),P) = \idx(Q(a),P)
\end{equation}
(this follows from \eqref{A.2} and $\norm{Q(x)-Q(y)} <1 \Rightarrow \idx(Q(x),Q(y))=0$).
\end{SL}

\smallskip
Let $A$ be a selfadjoint operator bounded from below and $B$ an $A$-form compact perturbation.  Then for any
$x_0$ and for $E_0$ sufficiently negative, $(A+x_0B -E_0)^{-1} - (A-E_0)^{-1}$ is compact, so by standard polynomial
approximations, $f(A+x_0B)-f(A)$ is compact for all continuous $f$ of compact support. In particular, if $E\notin
\sigma(A)\cup\sigma(A+x_0 B)$, then $P_{(-\infty,E)}(A+x_0 B)-P_{(-\infty, E)}(A)$ is compact, and so has a relative
index. Here is the key fact (a special case of eqn.\ (2.12) of Pushnitski \cite{Push}):

\begin{proposition}\lb{PA.1} Let $A$ be bounded from below and $B$ a nonnegative form compact perturbation. Suppose
$E\notin \sigma(A),\sigma(A+B)$ {\rm{(}}resp.\ $\sigma(A),\sigma(A-B)${\rm{)}}, then
\begin{equation} \lb{A.6}
\idx(P_{(-\infty,E)}(A+B), P_{(-\infty,E)}(A)) =- \delta_+(A,B;E)
\end{equation}
{\rm{(}}resp.
\begin{equation} \lb{A.7}
\idx (P_{(-\infty,E)}(A-B), P_{(-\infty,E)}(A))=\delta_- (A,B;E))
\end{equation}
\end{proposition}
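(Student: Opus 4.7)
The plan is to view both sides of \eqref{A.6} as two readings of the same path of spectral projections $Q(x):=P_{(-\infty,E)}(A+xB)$, $x\in[0,1]$. By the monotonicity and analyticity of the discrete eigenvalues of $A+xB$ in the gaps of $\sigma_\e(A)$ recalled above \eqref{1.15}, $E\in\sigma(A+xB)$ at only finitely many points $x_1<\cdots<x_N$ in $(0,1)$, with multiplicities $m_i:=\dim\ker(A+x_iB-E)$ summing to $\delta_+(A,B;E)$. I want to show that $\idx(Q(x),Q(0))$ is defined off $\{x_i\}$, is constant between consecutive $x_i$'s, and jumps by $-m_i$ at each $x_i$, so that $\idx(Q(1),Q(0))=-\delta_+(A,B;E)$.

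First I would establish the compactness and continuity prerequisites for property (c). Form compactness of $B$ and the resolvent identity give that $(A+xB-E_0)^{-1}-(A-E_0)^{-1}$ is compact for $E_0$ sufficiently negative, hence $f(A+xB)-f(A)$ is compact for any continuous $f$ of compact support. Writing $Q(x)$ as a Riesz contour integral of the resolvent around a rectangle enclosing the bounded-below portion of $\sigma(A+xB)$ up to just left of $E$, both compactness of $Q(x)-Q(0)$ and norm continuity of $x\mapsto Q(x)$ off the crossings follow. Property (c) then forces $\idx(Q(x),Q(0))$ to be constant on each open interval $(x_i,x_{i+1})$.

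The heart of the argument, and the main obstacle, is computing the jump at a single crossing $x_i$. Choose $r>0$ so small that the only spectrum of $A+xB$ in $[E-r,E+r]$ for $x$ near $x_i$ comes from the $m_i$ analytic eigenvalue branches that pass through $E$ at $x_i$; this is possible since $E$ lies in a gap of $\sigma_\e(A)=\sigma_\e(A+xB)$. Decompose
\[
Q(x)=R(x)+S(x),\qquad R(x):=P_{(-\infty,E-r)}(A+xB),\quad S(x):=P_{[E-r,E)}(A+xB),
\]
an orthogonal sum of spectral projections of the same operator. Strict monotonicity of the crossing branches forces $\rank S(x_i-\epsilon)=m_i$ and $S(x_i+\epsilon)=0$ for small $\epsilon>0$, while $R(x)$ remains norm continuous through $x_i$. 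Property (c) applied to $R$ yields $\idx(R(x_i+\epsilon),R(x_i-\epsilon))=0$; additivity (a) then reduces $\idx(Q(x_i+\epsilon),Q(x_i-\epsilon))$ to $\idx(R(x_i-\epsilon),R(x_i-\epsilon)+S(x_i-\epsilon))$, which by the finite-rank formula (b) equals $-\rank S(x_i-\epsilon)=-m_i$. Summing $-m_i$ over $i$ and combining with the between-crossing constancy gives \eqref{A.6}. The companion identity \eqref{A.7} follows by running the identical argument on the path $A-xB$: eigenvalues now descend through $E$, so at each crossing $S(x_i+\epsilon)$ carries rank $m_i$ rather than $S(x_i-\epsilon)$, every jump flips sign, and the total index becomes $+\delta_-(A,B;E)$.
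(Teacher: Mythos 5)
Your proof is correct and follows essentially the same strategy as the paper: establish compactness of the differences of spectral projections, show $\idx(Q(x),Q(0))$ is constant on intervals between crossings via property (c), and compute the jump of $-m_i$ at each crossing via finite-rank index arithmetic. The only difference is bookkeeping at a crossing. The paper keeps the parameter fixed at $x_0$ and shifts the energy cutoff, comparing $P_{(-\infty,E-\delta]}(A+x_0B)$ with $P_{(-\infty,E+\delta]}(A+x_0B)$ and then using (c) to slide back to $x_0\pm\veps$ (eqns. \eqref{A.9}--\eqref{A.12}); you instead decompose $Q(x)=R(x)+S(x)$ at $x_i\pm\veps$ and push the finite-rank piece $S$ across. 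These are equivalent manipulations of the same three index properties (a), (b), (c), so the proofs are essentially identical in content. One minor point worth making explicit: for $R$ to be norm-continuous through $x_i$ and for $\rank S(x_i-\veps)=m_i$, you must shrink $\veps$ after choosing $r$ so that the $m_i$ branches lie strictly between $E-r$ and $E+r$ (hence never hit $E-r$) on $(x_i-\veps,x_i+\veps)$; you gesture at this but it should be said.
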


\begin{proof} Since $\delta_+(A-B,B;E) = \delta_-(A,B;E)$ and $\idx(P,Q)=-\idx(Q,P)$, \eqref{A.6} implies \eqref{A.7}, so
we'll prove that.

Let $x_0\in [0,1]$ be such that $E$ is an eigenvalue of $A+x_0 B$ of multiplicity $k$. We show, for all sufficiently
small $\veps$, that
\begin{equation} \lb{A.8}
\idx(P_{(-\infty,E)}(A+(x_0 + \veps) B), P_{(-\infty,E)}(A+(x_0-\veps)B)) = -k
\end{equation}
Then, since $E$ is an eigenvalue of $A+xB$ for only finitely many $x$'s and $\idx (P_{(-\infty,E)}(A+xB),P_{(-\infty,E)}(A))$
is constant on the intervals between such $x$'s (by (c) above), \eqref{A.8} implies \eqref{A.6}.

Since $E\notin\sigma(A)$, there exists $\delta_0 >0$, so $[E-\delta_0,E+\delta_0]\cap\sigma(A)=\emptyset$, and then for all
$x$, $A+xB$ has only finitely many eigenvalues in $[E-\delta_0, E+\delta_0]$ and these eigenvalues are monotone in $x$. It
follows that we can find $\veps_0 >0$ and then $0 <\delta <\delta_0$ so that
\begin{SL}
\item[(a)] For $x\in (x_0-\veps_0, x_0 + \veps_0)$, $A+xB$ has exactly $k$ eigenvalues in $[E-\f{\delta}{2}, E+\f{\delta}{2}]$
and no eigenvalues in $[E-\delta, E -\f{\delta}{2})\cup (E + \f{\delta}{2}, E+\delta]$.

\item[(b)] If $x_0-\veps_0 < x < x_0$ (resp.\ $x_0 <x<x_0 + \veps_0)$, these $k$ eigenvalues are all in $[E-\f{\delta}{2},E]$
(resp.\ $[E,E + \f{\delta}{2}]$).
\end{SL}

\smallskip
If $0<\veps <\veps_0$, we have (the second and fourth follow from monotonicity, continuity, and (b))
\begin{gather}
P_{(-\infty, E]}(A+(x_0-\veps)B) = P_{(-\infty, E+\delta]} (A+(x_0-\veps)B) \lb{A.9} \\
\idx (P_{(-\infty, E+\delta]} (A+ (x_0-\veps)B), P_{(-\infty, E+\delta]} (A+x_0 B)) =0 \lb{A.10} \\
P_{(-\infty, E]} (A+(x_0+\veps)B) = P_{(-\infty, E-\delta]} (A+(x_0 + \veps )B) \lb{A.11} \\
\idx(P_{(-\infty, E-\delta]} (A+(x_0+\veps)B), P_{(-\infty, E-\delta]} (A+x_0 B)) =0 \lb{A.12}
\end{gather}
Thus, by \eqref{A.2},
\begin{align}
\text{LHS of \eqref{A.8}}
&= \idx(P_{(-\infty, E-\delta]} (A+x_0 B), P_{(-\infty, E+\delta]} (A+x_0 B)) \lb{A.13} \\
&= -k \lb{A.14}
\end{align}
by \eqref{A.3}.
\end{proof}

\begin{proof}[Proof of Proposition~\ref{P2.3}] By Proposition~\ref{PA.1} and \eqref{A.2}, both sides of \eqref{2.4} are
$\idx(P_{(-\infty, E)}(A), P_{(-\infty, E)} (A+B_+ - B_-))$.
\end{proof}

\bigskip

\end{document}